\theoremstyle{plain}
\newtheorem{theorem}{Theorem}[section]
\newtheorem{lemma}[theorem]{Lemma}
\newtheorem{proposition}[theorem]{Proposition}
\theoremstyle{property}
\newtheorem{property}{Property}
\theoremstyle{definition}
\newtheorem{definition}[theorem]{Definition}
\theoremstyle{remark}
\newtheorem{remark}[theorem]{Remark}
\numberwithin{equation}{section}
\begin{document}

\begin{frontmatter}
\title{ \bf Theta Functions and Adiabatic Curvature on a Torus}
\date{}
\author[Ching-Hao Chang]{Ching-Hao Chang}
\author[Jih-Hsin Cheng]{Jih-Hsin Cheng}
\author[I-Hsun Tsai]{I-Hsun Tsai}

\address[Ching-Hao Chang]{Department of Mathematics, Xiamen University Malaysia, Jalan Sunsuria, Bandar Sunsuria, 43900 Sepang, Selangor Darul Ehsan, Malaysia \\ chinghao.chang@xmu.edu.my}

\address[Jih-Hsin Cheng]{Institute of Mathematics, Academia Sinica and NCTS, 6F, Astronomy-Mathematics Building, No.1, Sec.4, Roosevelt Road,Taipei 10617, Taiwan \\ 
cheng@math.sinica.edu.tw}

\address[I-Hsun Tsai]{Department of Mathematics, National Taiwan University, Taipei 10617, Taiwan \\ 
ihtsai@math.ntu.edu.tw}

\date{\today}

\begin{keyword} Theta functions, Complex torus, Picard variety, Poincar\'e line bundle,
Connection, Curvature, Characters, Holonomy, Fourier-Mukai transform.
 \\
\emph{2010 MSC:} \vspace{0.2cm} Primary 32G05; Secondary 32S45, 18G40, 32C35, 32C25.
\end{keyword}



\begin{abstract} Let $M$ be a complex torus, $L_{\hat\mu}\to M$ be positive line bundles parametrized by
$\hat \mu\in {\rm Pic}^0(M)$, and $E\to {\rm Pic}^0(M)$ be a vector bundle with
$E|_{\hat\mu}\cong H^0(M, L_{\hat \mu})$.  We endow the total family $\{L_{\hat\mu}\}_{\hat\mu}$ with
a Hermitian metric that induces the $L^2$-metric on $H^0(M, L_{\hat \mu})$ hence on $E$.
By using theta functions $\{\theta_m\}_{m}$ on $M\times M$ as a family of functions on the first factor $M$  
with parameters in the second factor $M$, our computation of the full curvature tensor $\Theta_E$ of $E$ with respect to
this $L^2$-metric shows that $\Theta_E$ is essentially an identity matrix multiplied by a constant
$2$-form, which yields in particular the adiabatic curvature $c_1(E)$.
After a natural base change $M\to \hat M$ so that $E\times_{\hat M} M:=E'$,
we also obtain that $E'$ splits holomorphically into a direct sum of line bundles each of which is
isomorphic to $L_{\hat\mu=0}^*$.   Physically, the spaces $H^0(M, L_{\hat \mu})$ correspond to the
lowest eigenvalue with respect to certain family of Hamiltonian operators on $M$ parametrized by
$\hat\mu$ or in physical notation, by wave vectors $\bf k$.
\end{abstract}
\end{frontmatter}

\section{Introduction}

Let $M$ be a complex torus. To consider the set of all positive line bundles 
$L\to M$ with the same first Chern classes, one may first pick any positive
line bundle $L_0\to M$ with the required $c_1(L_0)=[\omega]$ for some closed 
$(1, 1)$ form $\omega$ which is integral, positive and of constant
coefficients. Write $\delta$ for the degree of $L_0$. For any holomorphic
automorphism $T:M\to M$, $c_1(T^*L_0)=[\omega]$ and it is well-known that
all line bundles on $M$ with the same $c_1$ can arise in this way. In fact,
it is known that $T$ is a translation $T_{\mu}:M\to M$ on $M$ for some fixed 
$\mu\in M$. We denote $T^*L_0$ by $L_{\mu}$.

This can be placed in another context by means of Poincar\'e line bundle $%
\mathrm{P}: M\times \hat M$ where $\hat M=\mathrm{Pic}^0(M)$. Let $\pi_1$, $%
\pi_2$ be the two projections of $M\times \hat M$ to $M$, $\hat M$
respectively. Write $\tilde E=\pi_1^*L_0\otimes \mathrm{P}$. Thinking of $%
{\tilde E}_{\mid_{M\times\{\hat \mu\}}}$ on $M\times\{\hat \mu\}$ as a family of
line bundles $L_{\hat\mu}$ on $M\cong M\times\{\hat\mu\}$, one has the
associated family of vector spaces $H^0(M, L_{\hat\mu})$ varying with $%
\hat\mu$. It forms a holomorphic vector bundle $E$ on $\hat M$. Similarly,
we have a holomorphic vector bundle $E^{\prime }$ on $M$ with $E^{\prime
}_{\mid_{\mu}}=H^0(M, L_{\mu})$. This type of construction is closely related to
the Fourier-Mukai transform. See \cite{P2}. There is a map $\varphi_{L_0}:M\to
\hat M$ sending $\mu\in M$ to $T_{\mu}^*L_0\otimes L_0^*\in \hat M$. For
precise notations and details, we refer to later appropriate sections.

A natural question of interest in this paper is to ask for the full
curvature of $E$. We have:

\begin{theorem}
\label{mainTheo} {\rm ($=$ Theorem $\ref{theorem 8}$.)} In the notations
as above, there exists a Hermitian metric $h_{\widetilde{E}}$ on $\widetilde{%
E}$ such that the induced $L^2$-metric on $E$, denoted by $h_E$, has the
curvature 
\begin{equation*}
\Theta(E, h_E)=(2\pi i)\omega (Id)_{\delta\times\delta}
\end{equation*}
where $(Id)_{\delta\times\delta}$ denotes the $\delta\times \delta$ identity
matrix. Therefore $c_1(E, h_E)=-\delta\omega$ (at the level of differential
forms).
\end{theorem}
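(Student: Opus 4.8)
The plan is to compute the curvature directly in an explicit holomorphic frame of $E$ built from theta functions, thereby reducing everything to the curvature of a scalar (line-bundle-type) metric. Recall that for a Hermitian holomorphic bundle with a local holomorphic frame, the Chern curvature is $\Theta = \bar\partial(\mathbf{h}^{-1}\partial \mathbf{h})$, where $\mathbf{h} = \big(h_E(\sigma_m,\sigma_n)\big)$ is the Gram matrix of the frame. First I would fix the metric $h_{\widetilde E}$ on $\widetilde E = \pi_1^*L_0\otimes\mathrm{P}$ as the tensor product of a translation-invariant Hermitian metric on $L_0$ whose Chern form is exactly $\omega$ and the canonical (universal) Hermitian metric on the Poincar\'e bundle $\mathrm{P}$. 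Fibre integration over the first factor, $h_E(s,s') = \int_M h_{\widetilde E}(s,s')\,\omega^g/g!$, then produces the $L^2$-metric on $E$ whose fibres are $H^0(M,L_{\hat\mu})$.

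Next I would write down the frame. Viewing the theta functions $\{\theta_m\}_m$ on $M\times M$ as sections $z\mapsto \theta_m(z,w)$ of $L_{\hat\mu}$ on the first factor with holomorphic dependence on the second-factor parameter $w$ (equivalently on $\hat\mu$ via $\varphi_{L_0}$), the $\theta_m$ furnish a local holomorphic frame $\{\sigma_m\}$ of $E$ over $\hat M$. The essential structural point is that, for each fixed $\hat\mu$, these $\theta_m$ are permuted up to unimodular phases by the action of the finite Heisenberg (theta) group on $H^0(M,L_{\hat\mu})$. The key step is then to show that in this frame the Gram matrix is a scalar multiple of the identity. Since the $L^2$-metric is invariant under the unitary action of the theta group, and $H^0(M,L_{\hat\mu})$ is an irreducible representation of it, Schur's lemma forces the invariant Hermitian metric to be unique up to a positive scalar, so that in the theta frame
\[
\mathbf{h}(\hat\mu) = \lambda(\hat\mu)\,(Id)_{\delta\times\delta}
\]
for a positive real-analytic function $\lambda$ on $\hat M$. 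Equivalently, the explicit theta transformation formulas show directly that the $\theta_m(\cdot,w)$ are mutually $L^2$-orthogonal with a common norm, which is what computing the full curvature tensor amounts to here.

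Given this reduction, $\mathbf{h}^{-1}\partial\mathbf{h} = (\partial\log\lambda)\,(Id)_{\delta\times\delta}$, hence $\Theta_E = \bar\partial\partial\log\lambda\,(Id)_{\delta\times\delta}$ is automatically diagonal and scalar; it remains only to identify $\bar\partial\partial\log\lambda$ with $(2\pi i)\omega$. I would compute $\lambda(\hat\mu) = \|\theta_m(\cdot,w)\|^2_{L^2}$ as a Gaussian theta integral: the pointwise norm $|\theta_m(z,w)|^2_{h_{\widetilde E}}$ carries the quadratic exponential potential of the Poincar\'e metric in the parameter $w$, and after integrating out $z$ the $w$-dependence of $\log\lambda$ is governed by this potential, whose $\partial\bar\partial$ is the curvature of $\mathrm{P}$ and equals a constant multiple of $\omega$.

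The main obstacle is precisely this last computation — carrying out the Gaussian integral cleanly, keeping track of the normalization of the Poincar\'e metric and of the identification $\hat\mu\leftrightarrow w$, and pinning down the constant so that $\bar\partial\partial\log\lambda = (2\pi i)\omega$ rather than merely a proportional form. Once the constant is fixed one obtains $\Theta(E,h_E) = (2\pi i)\omega\,(Id)_{\delta\times\delta}$, and therefore $c_1(E,h_E) = \tfrac{i}{2\pi}\,\mathrm{Tr}\,\Theta = \tfrac{i}{2\pi}\,\delta\,(2\pi i)\,\omega = -\delta\omega$ at the level of differential forms, as claimed.
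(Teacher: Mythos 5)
Your overall skeleton coincides with the paper's: both arguments take the $\mu$-dependent theta functions as an explicit holomorphic frame, show that the $L^2$ Gram matrix in that frame is diagonal with $\hat\mu$-dependence confined to a common scalar factor, and thereby reduce the full curvature tensor to a single scalar $\bar\partial\partial\log\lambda$. The one genuinely different ingredient you propose is the justification of diagonality by the unitary action of the finite Heisenberg group plus Schur's lemma, in place of the paper's direct integration (Lemma \ref{Lemma 4}). That substitution is legitimate and more conceptual, with one small correction: with the normalization of $\theta_m$ used here the norms are \emph{not} common --- Lemma \ref{Lemma 4} gives $\|\theta_m\|^2=\sqrt{\tau_2/2}\,\delta\,e^{2\pi m^2\tau_2/\delta^2}$, so the Gram matrix is $\lambda(\hat\mu)D$ with $D$ a constant diagonal matrix rather than $\lambda(\hat\mu)(Id)$. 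This is harmless (a constant diagonal factor drops out of $\bar\partial(\mathbf h^{-1}\partial\mathbf h)$), but it signals that the Schur argument alone does not pin down the frame you are actually using.

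The genuine gap is in the last step, which you defer as ``the main obstacle'' but which is in fact the entire content of the theorem: identifying $\bar\partial\partial\log\lambda$ with $(2\pi i)\omega$. Your stated mechanism for this --- that the $w$-dependence of $\log\lambda$ is ``governed by'' the potential of the Poincar\'e metric, ``whose $\partial\bar\partial$ is the curvature of $\mathrm{P}$ and equals a constant multiple of $\omega$'' --- is incorrect. The Poincar\'e weight is $e^{-\frac{4\pi}{\tau_2}z_2\mu_2}$ (see $(\ref{metric IdPhiP})$), bilinear rather than quadratic in the parameter, and its curvature $(\ref{curvP})$ is purely of mixed type $dz\wedge d\bar\mu+d\mu\wedge d\bar\mu$-free form $\frac{\pi}{\tau_2}(dz\wedge d\bar{\mu}+d\mu\wedge d\bar z)$; it contributes no $d\hat\mu\wedge d\bar{\hat\mu}$ term and is not a multiple of $\omega$. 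The actual $\hat\mu^2$-dependence of $\lambda$ emerges only after integrating out $z$: completing the square in the Gaussian $h_{L_0}(z)|\theta_m|^2 e^{-\frac{4\pi}{\tau_2}z_2\mu_2}$ shifts the Gaussian by $\mu_2$ and leaves the factor $e^{\frac{2\pi}{\tau_2}\mu_2^2}=h_{L_0}(\mu)^{-1}$. The paper packages exactly this by twisting with $\pi_2^*L_0$ to form $\widetilde K$, proving that the resulting Gram matrix is literally constant (Lemma \ref{Lemma 4}, Theorem \ref{KK}), and then untwisting: $E'=K\otimes L_0^*$ forces $\Theta_{E'}=-\Theta_{L_0}(\mu)(Id)$ (Theorem \ref{theorem 7}), which pushes down to Theorem \ref{theorem 8} via $\varphi_{L_0}$. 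Until you carry out that Gaussian computation (or an equivalent), you have established only that $\Theta_E$ is a scalar matrix times some $(1,1)$-form, not that the form is $(2\pi i)\omega$; and the heuristic you offer for the constant points at the wrong term of the curvature.
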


Our study into this question was influenced by a related work of C. T.
Prieto \cite{P2} where he studied similar questions on compact Riemann
surfaces but restricted to $c_{1}$. Among other things, he placed his
computations in the framework of local family index theorems, and derived
the $c_{1}$ from the theorem of Bismut-Gillet-Soul\'{e} \cite{BGS} in this
regard. To invoke these theorems, the Quillen metric need be introduced as 
an extra ingredient. By contrast, we use theta functions for explicit
computations and achieve the full curvature $\Theta $ of $E$.

In fact, the above $\Theta $ is obtained via the following result of
independent interest, which appears to be of algebraic geometry in nature.

\begin{theorem}
\label{mainTheo2} {\rm (See $(\ref{Esplit1})$.)} We have $%
\varphi_{L_0}^*E=E^{\prime }$ on $M$. Moreover, 
$E^{\prime }$ splits holomorphically into a direct sum of holomorphic line
bundles each of which is isomorphic to $L_0^*$, the dual of $L_0$.
\end{theorem}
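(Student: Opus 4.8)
The plan is to prove the two assertions separately, in both cases by making explicit the theta-function frame on the universal cover of the parameter torus.

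For the first assertion I would argue fibrewise and then upgrade by base change. By definition $E=\pi_{2*}\widetilde E$ with $\widetilde E=\pi_1^*L_0\otimes\mathrm P$, so the fibre of $E$ over $\hat\mu\in\hat M$ is $H^0(M,L_{\hat\mu})$ with $L_{\hat\mu}=L_0\otimes\mathrm P|_{M\times\{\hat\mu\}}$. Since $\varphi_{L_0}(\mu)=T_\mu^*L_0\otimes L_0^*$ as a point of $\mathrm{Pic}^0(M)$, its degree-zero factor is $\mathrm P|_{M\times\{\varphi_{L_0}(\mu)\}}=T_\mu^*L_0\otimes L_0^*$, whence $L_{\varphi_{L_0}(\mu)}=L_0\otimes(T_\mu^*L_0\otimes L_0^*)=T_\mu^*L_0=L_\mu$ and the fibre of $\varphi_{L_0}^*E$ over $\mu$ is $H^0(M,L_\mu)=E'_\mu$. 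Because every $L_{\hat\mu}$ is positive with the same $c_1$, the higher direct images $R^{>0}\pi_{2*}\widetilde E$ vanish and $h^0(M,L_{\hat\mu})\equiv\delta$ is constant; hence $E$ is locally free of rank $\delta$ and formation of $\pi_{2*}$ commutes with the base change $\varphi_{L_0}$, so $\varphi_{L_0}^*E=p_{2*}\big((1\times\varphi_{L_0})^*\widetilde E\big)$ on $M\times M$. The theorem of the square, which identifies $(1\times\varphi_{L_0})^*\mathrm P$ with the Mumford bundle built from $m^*L_0$ (with $m(z,\mu)=z+\mu$), is what promotes the fibrewise identification to a holomorphic one and gives $\varphi_{L_0}^*E=E'$.

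For the splitting I would work on the universal cover $\mathbb C^g$ of the parameter torus $M\ni\mu$. Fixing a basis $\{\theta_m\}$ of $H^0(M,L_0)$ by theta functions with characteristics, the assignment $\mu\mapsto\big(z\mapsto\theta_m(z+\mu)\big)$ gives, for each lift of $\mu$, a section of $L_\mu=T_\mu^*L_0$, and these $\delta$ sections form a holomorphic frame $\{\sigma_m\}$ of the pullback of $E'$ to $\mathbb C^g$. The entire content is the transition law of this frame under $\mu\mapsto\mu+\lambda$ for $\lambda$ in the period lattice. The classical quasi-periodicity of theta functions with characteristics gives, after converting $\theta_m(z+\mu+\lambda)$ back to the $L_\mu$-presentation, an identity of the shape $\sigma_m(\mu+\lambda)=e_\lambda(\mu)\,\sigma_{\rho_\lambda(m)}(\mu)$, in which the scalar $e_\lambda(\mu)$ is independent of the index $m$ and $\rho_\lambda$ is a permutation of the characteristics. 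The crucial structural point is that $e_\lambda(\mu)$ is exactly the automorphy factor of $L_0^*$ in the $\mu$-variable, so the matrix automorphy factor is $A(\lambda,\mu)=e_\lambda(\mu)\,P_\lambda$ with $P_\lambda$ constant permutation (monomial) matrices.

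These $\{P_\lambda\}$ assemble into the unique irreducible representation of the theta (Heisenberg) group of $L_0$ on $H^0(M,L_0)$, and I would split $E'$ by simultaneously diagonalizing them: they factor through the finite group attached to $K(L_0)=\ker\varphi_{L_0}$, and the discrete Fourier transform over this group produces $\delta$ eigenvectors $\sum_m c_m\theta_m(\,\cdot+\mu)$ along which the transition law collapses to the scalar $e_\lambda(\mu)$ alone, i.e. to the glueing cocycle of $L_0^*$. Each eigen-direction is therefore a holomorphic line subbundle isomorphic to $L_0^*$, and $E'\cong(L_0^*)^{\oplus\delta}$. The main obstacle is precisely the bookkeeping in these last two paragraphs: showing that after the canonical identification of the $L_{\mu+\lambda}$- and $L_\mu$-presentations the residual scalar is genuinely $z$-independent and equals the automorphy factor of $L_0^*$ rather than $L_0$, and that the finite monomial part can be diagonalized compatibly for all $\lambda$ so that the Fourier eigen-line bundles carry the trivial degree-zero character and no extra twist survives. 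The base change $\varphi_{L_0}:M\to\hat M$ is exactly what trivializes this permutation representation, which is why $E'$ splits on $M$ even though $E$ itself need not split on $\hat M$.
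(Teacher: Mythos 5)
Your plan follows essentially the same route as the paper: everything reduces to the single fact that the translated theta functions $\mu\mapsto\theta_m(\cdot+\mu)$ form a global holomorphic frame whose quasi-periodicity in the parameter $\mu$ is governed by the automorphy of $L_0$ itself. The paper packages this by showing that $\widetilde K:=\pi_1^*L_0\otimes(Id\times\varphi_{L_0})^*\mathrm{P}\otimes\pi_2^*L_0$ has the multipliers $(\ref{multiK})$, which are symmetric in $z$ and $\mu$ --- i.e.\ $\widetilde K\cong m^*L_0$ for $m(z,\mu)=z+\mu$, which is exactly your theorem-of-the-square step --- so that $\theta_m(z+\mu)$ is a global section of $\widetilde K$ (Theorem \ref{theorem 1}), hence each $\theta_m$ is a nowhere-vanishing section of $K=\pi_{2*}\widetilde K=E'\otimes L_0$, and $(\ref{Esplit1})$ gives $E'=K\otimes L_0^*\cong(L_0^*)^{\oplus\delta}$; the fibrewise independence is Lemma \ref{Lemma 4} (or, metric-free, Remark \ref{rem3r}). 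Your part (1), via constancy of $h^0$ and cohomology-and-base-change, is the content of Proposition \ref{prop2}.

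Two pieces of the bookkeeping you defer deserve a warning, since one of them is where the proof could actually go wrong. First, the residual scalar is the multiplier of $L_0$, not of $L_0^*$: from $(\ref{K formula})$ one gets $\theta_m(z+\mu+\lambda_2)=e^{-2\pi i(z+\mu)-\pi i\tau}\,\theta_m(z+\mu)$, and after stripping the presentation-change factor $e^{-2\pi iz}$ the remaining scalar $e^{-2\pi i\mu-\pi i\tau}$ is exactly $e_{\lambda_2}$ of $(\ref{eqnL0})$ read in the variable $\mu$. It is precisely because the frame carries the $L_0$-factor that each line it spans is $L_0^*$: a nowhere-vanishing section of $E'\otimes L_0=\mathcal{H}om(L_0^*,E')$ is an embedding $L_0^*\hookrightarrow E'$. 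As written, your identification of the scalar (``automorphy factor of $L_0^*$'') and your inference (``scalar $=$ cocycle of $L_0^*$ $\Rightarrow$ summand $\cong L_0^*$'') are each dualized and happen to cancel; fix one convention and check it. Second, the Heisenberg/Fourier layer is vacuous here, and it is important that it be vacuous: with the normalization $\lambda_1=\delta e_1$ and the basis $\theta_m(z)=e^{2\pi i\frac m\delta z}\vartheta(z+\frac m\delta\tau,\tau)$, translation by $\lambda_1$ fixes each $\theta_m$ and translation by $\lambda_2$ multiplies every $\theta_m$ by the same scalar, so $P_\lambda=Id$ for all $\lambda$. If the $P_\lambda$ were genuinely nontrivial, simultaneous diagonalization would produce eigenvalues which are nontrivial characters $\chi:\Lambda\to U(1)$, and the eigen-lines would be $L_0^*\otimes N_\chi$ for the corresponding torsion points $N_\chi\in\mathrm{Pic}^0(M)$ --- not all isomorphic to $L_0^*$, contradicting the statement you are trying to prove. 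So the triviality of the monomial part is not optional tidying; it is the substance of the argument, and it must be (and, with the paper's basis, easily is) verified directly.
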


There are rich connections between these problems and physics, for which we
refer mathematically minded readers to the nice presentation by Prieto in 
\cite[Introduction]{P2}, including the term "adiabatic curvature". For
physical interest, it is desirable to compute the adiabatic curvature of 
\textit{spectral bundles} (cf. \cite{ABGM}), where our space of holomorphic
sections corresponds to the lowest eigenvalue under suitable interpretation.
Some interesting results in this direction (for higher eigenvalues) have
been obtained by Prieto in \cite{P1} and \cite{P2}. Put in this perspective,
our present work is far from being complete. Another immediate question is
to ask for the higher dimensional generalization of Theorem \ref{mainTheo}
say, on an Abelian variety. Further, our present approach is transcendental
in nature, and from the purely algebraic point of view, it is not altogether
clear how Theorem \ref{mainTheo2} can be proved in an algebraic manner. A
third question of interest appears to be a study into all of these problems
under deformation of complex structures on $M$. We hope to come back to
(some of) these questions in future publications.

We remark that the theoretical and experimental aspects of the role played
by the first Chern class $c_{1}$ have long been noticed by physicists under
study of, among others, "geometric phases in quantum systems" in general and
the quantum Hall effect in particular (cf. \cite{BM}, \cite{L}, \cite{R}).
In these settings the adiabatic curvature usually refers to the $c_{1}$ (or $%
\frac{2\pi }{i}c_{1}$) of spectral bundles associated with certain
Hamiltonian operators depending on parameters such as wave vectors (cf. \cite%
[(13.26) in p. 314]{BM}). While the theoretical/abstract formula for the
(full) curvature is already available, some physical approaches to the
actual computation are carried out using, for instance, "magnetic
translation operators" (cf. \cite{Av} and references therein) and even
noncommutative geometry methods (cf. \cite{BES}). To the best of our
understanding, these studies and explicit results focus only on $c_{1}$
rather than the full curvature tensor as done here.

The full curvature in related contexts has been of interest in the mathematical
literature. Indeed, it appears in disguise of the Chern character of the index
bundle (see \cite{Bis}) and more recently, it also plays an important role
in the work of B. Berndtsson for vector bundles associated to holomorphic
fibrations (see \cite{Ber}).
 
To outline our approach, some difficulties are in order. It is natural to
consider metrics $h_{\hat\mu}$ on $L_{\hat\mu}$ for $\mu\in\hat M$ which are
of constant curvature $\frac{2\pi}{i}\omega$. As this curvature condition
determines $h_{\hat\mu}$ only up to multiplicative constants, one is
required not only to make a choice but also, more importantly, to do it in a
consistent manner with respect to $\hat\mu$ globally. By this, among others,
we are led to the Poincar\'e line bundle $\mathrm{P\to M\times\hat M}$. But
we found it much less illuminating if we fell into the description of $%
\mathrm{P}$ in terms of complex algebraic geometry as usually given in the
literature. Fortunately, the needed differential geometric aspects on the
Poincar\'e line bundle $\mathrm{P}$ have been developed in part by \cite{DK}
from the gauge theory perspective (cf. Section $\ref{section 5}$). This is
precisely what we resort to here, and by proving an identification theorem,
we can endow $\mathrm{P}$ with certain metric geometry data (cf. Section $%
\ref{section 6}$).

Next, from the physical point of view it is natural to use the $L^2$-metric
of the system for the curvature computation. For this purpose, the explicit
theta functions as global sections are expected to deserve a try. However,
as far as the Theorem \ref{mainTheo2} is concerned, our difficulty lies in
that the choice of these functions \textit{a priori} depends on $\hat\mu$
although the curvature computation only makes use of a \textit{local} basis
of theta functions valid around $\hat\mu$, for $L_{\hat\mu}\to
M\times\{\hat\mu\}$. We are therefore led to exploit a \textit{global}
property of these ($\hat\mu$-dependent) theta functions (cf. Section $\ref%
{section 1-2}$ and Section $\ref{section 2}$). For the formulation it turns
out to get most simplified if we shift the viewpoint about parameters from $%
\hat\mu\in \hat M$ to $\mu\in M$ via the map $\varphi_{L_0}:M\to\hat M$ as
given precedingly (cf. Section $\ref{section 3}$). We thus form the theta
functions on $M\times M$ as a family of functions defined on the first $M$
as well as parametrized by the second $M$ (cf. Section $\ref{section 4}$).
In this way we can eventually accomplish a holomorphic splitting of the
vector bundle in the sense of Theorem \ref{mainTheo2}.

In retrospect, it remains somewhat unexpected why the $L^{2}$-metric
property of these global theta functions so formed, behave nicely to suit
our (computational) need. Indeed, it is only after the explicit computation
that we find this neat fact. See the main technical Lemma \ref{Lemma 4} for
details. Nevertheless, we are prompted to perceive Theorem \ref{mainTheo2}
as a conceptual picture in support of the computational result Theorem \ref%
{mainTheo} (cf. Remark $\ref{rem3r}$ and ii) of Remark $\ref{rem85}$).

\section*{Acknowledgements}

The first author is supported by Xiamen University Malaysia Research Fund
(Grant No : XMUMRF/2019-C3/IMAT/0010). The second author is supported by the
project MOST 107-2115-M-001-011 of Ministry of Science and Technology and
NCTS of Taiwan. The third author warmly thanks the Academia Sinica for
excellent working conditions that make this joint work more enjoyable.

\section{Holomorphic line bundles over the compact Riemann surface $M = V /
\Lambda$}

\label{section 1-2} The principal aim of this section is to collect the
background materials and to fix the notations for later use. Basic
references are, for instance, \cite{GH} and \cite{M}. Let $V$ be a complex
vector space of dimension 1 and $\Lambda = \mathbb{Z}\, \{ \lambda_{1},\
\lambda_{2}\} \subseteq V $ be a discrete lattice where $Im \frac{\lambda_{2}%
}{\lambda_{1}} > 0$. The compact Riemann surface $M= V / \Lambda$ is a
complex torus. Let $L$ be a holomorphic line bundle over $M$. The first
Chern class $c_{1}(L)$ of $L$ is a complete invariant of $L$ as a $%
C^{\infty} $ line bundle. The Picard group $\mathrm{Pic}\, (M)$ are the
isomorphic classes of holomorphic line bundles over $M$. The connected
component $\mathrm{Pic}^{0}\, (M)$ of $\mathrm{Pic}\, (M)$ represents all
the equivalent classes of degree 0 holomorphic line bundles over $M$.

We let $\{ d{x_{1}}$, $dx_{2} \}$ be the 1-forms on $V$ dual to $\{
\lambda_{1}, \lambda_{2} \}$, that is, $\int_{\lambda_{i}} dx_{j} =
\delta_{ij}$. In terms of this basis, any positive holomorphic line bundle $%
L $ over $M$ has a Hodge form $\omega = \delta \ dx_{1} \wedge dx_{2}$ on $M$
satisfying $c_{1}(L)= [\omega]$, $\delta \in \mathbb{N}$.

To fix the complex coordinates, choose a $\delta \in \mathbb{N}$ and let $%
e_{1}= \frac{\lambda_{1}}{\delta}$, $\tau = \tau_{1}+ i \tau_{2} =\frac{%
\lambda_{2}}{e_{1}}$. We write $\lambda_{1} = \delta e_{1}$, $\lambda_{2} =
\tau e_{1} $ with $\tau_{2}>0$. Let $z=z_{1}+iz_{2}$ with $z_{1}, z_{2} \in 
\mathbb{R}$, be the complex coordinate on $V$ (and on $M$) such that $dz$ is
dual to $e_{1}$.

We denote $z\,e_{1}\in V$ by $z$ whenever there is no danger of confusion.
One has%
\begin{equation}
\begin{cases}
dz=\delta dx_{1}+\tau dx_{2} \\ 
d\overline{z}=\delta \,dx_{1}+\overline{\tau }\,dx_{2}.%
\end{cases}%
\hspace*{170pt}  \label{2.0}
\end{equation}

We define $L_{0}$ to be the holomorphic line bundle over $M$ given by
multipliers 
\begin{equation}  \label{eqnL0}
\begin{cases}
e_{\lambda_{1}}(z) \equiv 1 \\ 
e_{\lambda_{2}}(z) = e^{- 2 \pi i z - \pi i \tau}.%
\end{cases}
\hspace*{170pt}
\end{equation}
Notice that any system of multipliers $\{\, e_{\lambda} \in \mathcal{O}%
^{*}(V)\, \}_{\lambda \in \Lambda}$ for a holomorphic line bundle $L$ on $%
M=V / \Lambda$ has to satisfy the \textbf{compatibility relations :} \newline
\begin{equation}  \label{compatibility}
e_{\lambda^{^{\prime }}}(z+\lambda)\,
e_{\lambda}(z)=e_{\lambda}(z+\lambda^{^{\prime }})\, e_{\lambda^{^{\prime
}}}(z)=e_{\lambda+\lambda^{^{\prime }}}(z), \hspace*{10pt} \forall \
\lambda, \lambda^{^{\prime }}\in \Lambda . \vspace*{8pt}
\end{equation}
It is known that $c_{1}(L_{0}) = [\omega]$, $\omega = \delta \ dx_{1} \wedge
dx_{2}$.

This description helps to give an explicit basis of global sections. More
precisely, write $\pi : V \rightarrow M= V / \Lambda$ for the projection.
There is a trivialization $\phi : \pi^{*}L_{0} \rightarrow V \times \mathbb{C%
}$ of $\pi^{*}L_{0}$ such that for any global holomorphic section $\tilde{%
\theta}$ of $L_{0} \rightarrow M$, the function $\theta := (\phi^{-1})^{*}
(\pi^{*} \tilde{\theta})$ is a quasi-periodic entire function on $V$
satisfying 
\begin{equation}  \label{eqn3}
\begin{cases}
\theta (z+ \lambda_{1})= \theta(z) \\ 
\theta (z+ \lambda_{2})= e^{- 2 \pi i z - \pi i \tau}\ \theta(z), \hspace*{%
40pt} \forall z \in V.%
\end{cases}
\hspace*{50pt}
\end{equation}

\noindent By the same token, a Hermitian metric $h_{L_{0}}(z)>0$ on $L_{0}$
where 
\begin{equation*}
||\tilde{\theta}(\pi (z))||_{h_{L_{0}}}^{2}:=h_{L_{0}}(z)\,|\theta (z)|^{2},
\end{equation*}%
is also characterized by the quasi-periodic property:\newline
\begin{equation}
\begin{cases}
h_{L_{0}}(z+\lambda _{1})=h_{L_{0}}(z) \\ 
h_{L_{0}}(z+\lambda _{2})=e^{-4\pi z_{2}-2\pi \tau _{2}}\,h_{L_{0}}(z),%
\hspace*{15pt}\forall z\in V.%
\end{cases}%
\hspace*{55pt}\vspace*{12pt}  \label{eqn4}
\end{equation}

\begin{lemma}
\label{Lemma 1} For the holomorphic line bundle $L_{0} \rightarrow M$, one
can use the quasi-periodic entire functions on $V$ \newline
\begin{equation}
\theta_{m}(z) :=\underset{k \in \mathbb{Z}}{\Sigma}\, e^{\pi i k^{2}
\tau}e^{2 \pi i \tau \frac{m}{\delta} k} e^{2 \pi i \frac{(k \delta + m)}{%
\delta} z }, \ \mbox{\small \  $m=0, 1,..., \delta -1,$}
\end{equation}
as a basis of global holomorphic sections of $L_{0}$, and \newline
\begin{equation}  \label{hLo}
h_{L_{0}}(z) := e^{\frac{- 2 \pi}{\tau_{2}}(z_{2})^{2}} \hspace*{160pt}
\end{equation}
as a metric on $L_{0}$.
\end{lemma}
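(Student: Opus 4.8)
The statement splits into two independent claims: that the $\theta_m$, $m=0,\dots,\delta-1$, form a basis of $H^0(M,L_0)$, and that $h_{L_0}$ defines a Hermitian metric on $L_0$. The plan is to treat each claim by directly checking the defining quasi-periodicity relations (\ref{eqn3}) and (\ref{eqn4}), and then to supply the linear-algebra and dimension count that upgrades ``sections'' to ``basis''. Throughout I use the complex coordinate $z$ in which $\lambda_1$ acts by $z\mapsto z+\delta$ and $\lambda_2$ acts by $z\mapsto z+\tau$.

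For the first claim I would first establish that each $\theta_m$ is a well-defined entire function: since $|e^{\pi i k^2\tau}|=e^{-\pi k^2\tau_2}$ and $\tau_2>0$, the series converges absolutely and locally uniformly on $V$, hence defines a holomorphic function. Next I would verify the two periodicity conditions in (\ref{eqn3}). The first, $\theta_m(z+\delta)=\theta_m(z)$, is immediate: the shift multiplies the $k$-th term by $e^{2\pi i(k\delta+m)}=1$, because $k\delta+m\in\mathbb{Z}$. The second condition, $\theta_m(z+\tau)=e^{-2\pi i z-\pi i\tau}\theta_m(z)$, is the technical heart. Here I would reindex the series by $k\mapsto k+1$ and complete the square via $k^2+2k=(k+1)^2-1$; the $m$-dependent constants $e^{\pm 2\pi i\tau m/\delta}$ cancel, the $z$-dependent exponent produces the factor $e^{-2\pi i z}$, and the leftover $\tau$-exponent produces $e^{-\pi i\tau}$, giving exactly the required multiplier in (\ref{eqnL0}). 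I expect this index shift to be the main obstacle, in the sense that it is the only place where the precise form of the exponents in the definition of $\theta_m$ is essential; everything else is bookkeeping.

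To pass from sections to a basis I would argue linear independence and then count dimensions. The frequencies appearing in $\theta_m$ are $k+m/\delta$, $k\in\mathbb{Z}$, all with fractional part $m/\delta$; for distinct $m\in\{0,\dots,\delta-1\}$ these fractional parts are distinct, so the sets of exponentials $e^{2\pi i(k+m/\delta)z}$ occurring in different $\theta_m$ are disjoint. Linear independence of distinct exponentials then forces the $\theta_m$ to be linearly independent. Finally, since $\deg L_0=\delta$ and $M$ has genus one, Riemann--Roch (with $h^1=0$ for positive degree) gives $\dim H^0(M,L_0)=\delta$; the $\delta$ independent sections $\theta_m$ therefore form a basis.

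For the second claim I would check that $h_{L_0}(z)=e^{-2\pi z_2^2/\tau_2}$ is positive (clear from the exponential) and satisfies (\ref{eqn4}). Under $z\mapsto z+\delta$ the imaginary part $z_2$ is unchanged, so $h_{L_0}$ is invariant, matching the first line of (\ref{eqn4}). Under $z\mapsto z+\tau$ one has $z_2\mapsto z_2+\tau_2$, and expanding $(z_2+\tau_2)^2=z_2^2+2\tau_2 z_2+\tau_2^2$ extracts exactly the factor $e^{-4\pi z_2-2\pi\tau_2}$ required by the second line of (\ref{eqn4}). This part is routine, the only care being the bookkeeping of how $z\mapsto z+\lambda_i$ acts on $z_2$.
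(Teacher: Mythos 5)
Your proof is correct, but it takes a more self-contained route than the paper. The paper's own proof is a reduction to the classical Riemann theta function: it observes that $\theta_{0}(z)=\vartheta(z,\tau)$ when $\delta=1$, and that in general $\theta_{m}(z)=e^{2\pi i\frac{m}{\delta}z}\,\vartheta(z+\frac{m}{\delta}\tau,\tau)$ is a translate of $\vartheta$ times an exponential factor, and then cites the known quasi-periodicity of $\vartheta$ together with $(\ref{eqn3})$ and $(\ref{eqn4})$; the basis statement is left implicit as part of the classical theory of theta functions. You instead verify everything from scratch: absolute and locally uniform convergence from $|e^{\pi ik^{2}\tau}|=e^{-\pi k^{2}\tau_{2}}$, the $\lambda_{1}$-periodicity from integrality of $k\delta+m$, the $\lambda_{2}$-quasi-periodicity by the index shift $k\mapsto k+1$ and completing the square (which indeed produces exactly the multiplier $e^{-2\pi iz-\pi i\tau}$ of $(\ref{eqnL0})$), linear independence from the disjointness of the fractional frequencies $m/\delta$, and the count $\dim H^{0}(M,L_{0})=\delta$ from Riemann--Roch with $h^{1}=0$ in positive degree. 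What your approach buys is a complete, verifiable argument for the basis claim that the paper only gestures at; what the paper's approach buys is brevity and the conceptual identification of the $\theta_{m}$ as translates of the single classical $\vartheta$, which the authors reuse later (e.g.\ in Remark $\ref{rem3r}$, where the nonvanishing of each $\theta_{m}$ is deduced precisely from its being a translate of $\vartheta$). The metric verification is essentially identical in both treatments.
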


\begin{proof}
For the special case $\delta=1$, $m=0$
\begin{equation}
\theta_{0}(z) =\underset{k \in \mathbb{Z}}{\Sigma}\, e^{\pi i k^{2} \tau}  e^{2 \pi i k z } = \vartheta (z, \tau) \hspace*{20pt} \forall z \in V
\end{equation}
is the Riemann theta function. For general $\delta \in \mathbb{N}$, $m=0,...,\delta-1$,
\begin{equation}
\theta_{m}(z) = e^{\tiny 2 \pi i \frac{m}{\delta} z}\  {\vartheta ( z+ \frac{m}{\delta}\tau,\, \tau)} \hspace*{30pt} \forall z \in V
\end{equation}
is a translate of $\vartheta(z, \tau)$ multiplied by the exponential factor $e^{\tiny 2 \pi i \frac{m}{\delta} z}$. The lemma follows easily from $(\ref{eqn3})$, $(\ref{eqn4})$ and the quasi-periodic property of the Riemann theta function.
\end{proof}

For any $\mu \,e_1\in V$, $\mu=\mu_{1}+ i \mu_{2}$, we have a map\newline
\begin{equation*}
\mathcal{T}_{\mu} : M \mapsto M \vspace*{8pt}
\end{equation*}
defined by the translation by $[\mu]\in M$. Let $L_{\mu} := \mathcal{T}%
^{*}_{\mu}L_{0} \rightarrow M $. Then $L_{\mu}$ can be given by multipliers 
\newline
\begin{equation}  \label{eqnL}
\begin{cases}
e_{\lambda_{1}}(z) \equiv 1 \\ 
e_{\lambda_{2}}(z) = e^{- 2 \pi i z - 2 \pi i \mu- \pi i \tau}.%
\end{cases}
\hspace*{155pt} \vspace*{8pt}
\end{equation}

\noindent In the same vein as before, any global holomorphic sections $%
\tilde{\theta}$ of $L_{\mu} \rightarrow M$ can be described via
quasi-periodic entire functions $\theta$ on $V$ satisfying \newline
\begin{equation}  \label{theta section}
\begin{cases}
\theta (z+ \lambda_{1})= \theta(z) \\ 
\theta (z+ \lambda_{2})= e^{- 2 \pi i ( z + \mu) - \pi i \tau}\ \theta(z), 
\hspace*{20pt} \forall z \in V,%
\end{cases}
\hspace*{60pt} \vspace*{8pt}
\end{equation}
and the metric $h_{L_{\mu}}(z)$ on $L_{\mu} \rightarrow M$: 
\begin{equation}  \label{hLmu}
\begin{cases}
h_{L_{\mu}} (z+ \lambda_{1})= h_{L_{\mu}}(z) \\ 
h_{L_{\mu}} (z+ \lambda_{2})= e^{-4 \pi ( z_{2} + \mu_{2} ) - 2 \pi
\tau_{2}}\ h_{L_{}\mu} (z),\hspace*{10pt} \forall z \in V.%
\end{cases}
\hspace*{30pt} \vspace*{8pt}
\end{equation}

It is well known that all the holomorphic line bundles on $M$ having the
same first Chern class as $L_{0}$ can be represented as a translate of $%
L_{0} $. As a consequence, by Lemma $\ref{Lemma 1}$, $(\ref{theta section})$
and $(\ref{hLmu})$, one has: \newline

\begin{lemma}
\label{Lemma 2} Fix a $\mu\in V$. For the holomorphic line bundle $L_{\mu}
\rightarrow M$ as defined above, one can use the quasi-periodic entire
functions on $V:$ 
\begin{equation}
\theta_{m}(z, \mu)= \theta_{m}(z+\mu ) = \underset{k \in \mathbb{Z}}{\Sigma}%
\, e^{\pi i k^{2} \tau}e^{2 \pi i \tau \frac{m}{\delta} k} e^{2 \pi i \frac{%
(k \delta + m)}{\delta} (z+\mu) }, \ 
\mbox{\small \  $m=0, 1,..., \delta
-1$,}
\end{equation}
as a basis of global holomorphic sections of $L_{\mu}$, and 
\begin{equation}  \label{lemhL}
h_{L_{\mu}}(z) = h_{L}(z+ \mu) = e^{\frac{- 2 \pi}{\tau_{2}}(z_{2} +
\mu_{2})^{2}} \hspace*{120pt}
\end{equation}
as a metric on $L_{\mu}$.
\end{lemma}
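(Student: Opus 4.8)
The plan is to exploit the fact that $L_{\mu}$ is, by its very definition, the pullback $\mathcal{T}_{\mu}^{*}L_{0}$ of $L_{0}$ under the translation $\mathcal{T}_{\mu}\colon M\to M$, $[z]\mapsto[z+\mu]$, which is a biholomorphism of $M$. The tautological bundle map $\mathcal{T}_{\mu}^{*}L_{0}\to L_{0}$ covering $\mathcal{T}_{\mu}$ therefore induces a \emph{linear} isomorphism $\mathcal{T}_{\mu}^{*}\colon H^{0}(M,L_{0})\;\xrightarrow{\ \sim\ }\;H^{0}(M,L_{\mu})$ on spaces of global holomorphic sections. Consequently, once I transport the trivialization of $\pi^{*}L_{0}$ used in Lemma~\ref{Lemma 1} to a trivialization of $\pi^{*}L_{\mu}$, the basis $\{\theta_{m}\}_{m=0}^{\delta-1}$ of $H^{0}(M,L_{0})$ will be carried to a basis of $H^{0}(M,L_{\mu})$, and all that remains is to read off the quasi-periodic entire functions on $V$ that represent the image sections. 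This reduces the entire lemma to a transport of structure from Lemma~\ref{Lemma 1}.

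First I would fix the trivialization. Pulling back the trivialization $\phi$ of $\pi^{*}L_{0}$ along the lift $z\mapsto z+\mu$ of $\mathcal{T}_{\mu}$ to $V$, a global section of $L_{0}$ represented (as in $(\ref{eqn3})$) by the entire function $\theta(z)$ is represented, as a section of $L_{\mu}$, by $\theta(z+\mu)$. I then check directly that this substitution converts the $L_{0}$-multipliers $(\ref{eqnL0})$ into the $L_{\mu}$-multipliers $(\ref{eqnL})$: replacing $z$ by $z+\mu$ in the second line of $(\ref{eqn3})$ produces the factor $e^{-2\pi i(z+\mu)-\pi i\tau}$, which is exactly the right-hand side of the second equation in $(\ref{theta section})$, while the $\lambda_{1}$-relation is left unchanged. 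Applying this to each $\theta_{m}$ from Lemma~\ref{Lemma 1} shows that $\theta_{m}(z+\mu)$ satisfies $(\ref{theta section})$, so the functions $\theta_{m}(z,\mu):=\theta_{m}(z+\mu)$ are genuine global holomorphic sections of $L_{\mu}$. Since $\mathcal{T}_{\mu}^{*}$ is a linear isomorphism and $\{\theta_{m}\}_{m=0}^{\delta-1}$ is a basis of the $\delta$-dimensional space $H^{0}(M,L_{0})$, the $\delta$ functions $\{\theta_{m}(z+\mu)\}_{m=0}^{\delta-1}$ form a basis of $H^{0}(M,L_{\mu})$.

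For the metric I would argue in parallel. Pulling back $h_{L_{0}}$ along $z\mapsto z+\mu$ gives the candidate $h_{L_{\mu}}(z):=h_{L_{0}}(z+\mu)=e^{-\frac{2\pi}{\tau_{2}}(z_{2}+\mu_{2})^{2}}$, and substituting $z\mapsto z+\mu$ in the quasi-periodicity relations $(\ref{eqn4})$ for $h_{L_{0}}$ yields precisely $(\ref{hLmu})$; in particular the $\lambda_{2}$-factor becomes $e^{-4\pi(z_{2}+\mu_{2})-2\pi\tau_{2}}$, which matches $|e_{\lambda_{2}}(z)|^{-2}$ for the new multipliers, so that $h_{L_{\mu}}(z)\,|\theta(z)|^{2}$ is a well-defined squared norm of the corresponding section of $L_{\mu}$. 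The whole argument is thus elementary, and the only point demanding care — the main (and rather modest) obstacle — is the bookkeeping: verifying that the chosen lift of $\mathcal{T}_{\mu}$ and the induced trivialization are mutually consistent so that the transformed multipliers and metric agree with $(\ref{eqnL})$ and $(\ref{hLmu})$ \emph{on the nose}, rather than merely up to a constant factor or a coboundary. Once this compatibility is confirmed, the statement follows immediately from Lemma~\ref{Lemma 1}.
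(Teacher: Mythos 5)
Your proposal is correct and follows essentially the same route as the paper, which likewise obtains Lemma \ref{Lemma 2} by translating the data of Lemma \ref{Lemma 1} by $\mu$ (i.e., substituting $z\mapsto z+\mu$) and checking compatibility with $(\ref{theta section})$ and $(\ref{hLmu})$. Your write-up merely makes explicit the verification that the paper leaves as a one-line deduction.
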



\section{The dual torus $\protect\widehat{M} = \mathrm{Pic}^{0} (M)$ of $M$}

\label{section 2}

The notational convention here follows that of \cite[p. 307-317]{GH} unless
specified otherwise. We have a natural identification for the set $\mathrm{%
Pic}^{0} (M)$ : 
\begin{equation}  \label{Pic0}
\mathrm{Pic}^{0}(M) \cong \frac{H^{1}(M, \mathcal{O})}{H^{1}(M, \mathbb{Z})}
\cong \frac{H_{\overline{\partial}}^{0,1}(M)}{H^{1}(M, \mathbb{Z})}
\end{equation}
via the long exact cohomology sequence associated with the exponential sheaf
sequence for the first isomorphism, and the Dolbeault isomorphism for the
second, where the map $H^{1}(M, \mathbb{Z}) \rightarrow H_{\overline{\partial%
}}^{0,1}(M)$ is given by 
\begin{equation*}
\omega \mapsto \omega^{0,1}. \vspace*{8pt}
\end{equation*}

\noindent The image of $H^{1}(M, \mathbb{Z})$ in ${\overline{V}}^{\, *}= H_{%
\overline{\partial}}^{0,1}(M)$ is the lattice ${\overline{\Lambda}}^{\, *}= 
\mathbb{Z}\, \{\, dx_{1}^{*},\, dx_{2}^{*}\, \}$ which consists exactly of 
\textit{conjugate linear functionals} on $V$ whose real part is
half-integral on $\Lambda \subseteq V$. See below. $\mathrm{Pic}^{0}(M) = {\overline{V%
}}^{\, *} /\, {\overline{\Lambda}}^{\, *}$ is often called the dual torus of 
$M$, and denoted as $\widehat{M}$. 

To be precise, we write the \textit{conjugate linear part} of $dx_{1}$%
, $dx_{2}$ as \newline
\begin{equation}
\begin{cases}
{dx_{1}}^{*} = \overline{\Pi}_{11}\, d \overline{z} = \frac{1}{2 \delta} (1
- i \frac{\tau_{1}}{ \tau_{2}})\, d \overline{z} \\ 
{dx_{2}}^{*} = \overline{\Pi}_{21}\, d \overline{z} = \frac{i}{2 \tau_{2}}\,
d \overline{z}%
\end{cases}
\hspace*{120pt} \vspace*{8pt}
\end{equation}

\noindent from (cf. (\ref{2.0}))\newline
\begin{equation}
\begin{cases}
dx_{1}=\Pi _{11}\,dz+\overline{\Pi }_{11}\,d\overline{z}=\frac{1}{2\delta }%
(1+i\frac{\tau _{1}}{\tau _{2}})\,dz+\frac{1}{2\delta }(1-i\frac{\tau _{1}}{%
\tau _{2}})\,d\overline{z} \\ 
dx_{2}=\Pi _{21}\,dz+\overline{\Pi }_{21}\,\overline{dz}=\frac{-i}{2\tau _{2}%
}\,dz+\frac{i}{2\tau _{2}}\,d\overline{z}.%
\end{cases}%
\vspace*{8pt}  \label{x-zcoord}
\end{equation}%

\noindent Re-ordering $\{{dx_{1}}^{\ast },{dx_{2}}^{\ast }\}$
we set 
\begin{equation*}
{dy_{1}}^{\ast }=-{dx_{2}}^{\ast },\hspace*{16pt}{dy_{2}}^{\ast }={dx_{1}}%
^{\ast }=\frac{\tau }{\delta }\,{dy_{1}}^{\ast }.
\end{equation*}

\noindent Setting ${e_{1}}^{\ast }:={dy_{1}}^{\ast }$, we have the lattice 
\begin{equation*}
{\overline{\Lambda }}^{\,\ast }=\mathbb{Z}\,\{{dy_{1}}^{\ast },{dy_{2}}%
^{\ast }\}=\mathbb{Z}\,\{\,{e_{1}}^{\ast },\frac{\tau }{\delta }\,{e_{1}}%
^{\ast }\}.
\end{equation*}%

One has the map $\varphi_{L_{0}} : M \rightarrow \mathrm{Pic}^{0}(M)$%
\ defined, via the translation ${\mathcal{T}}_{\mu} : M \rightarrow M$ with 
$[\mu ] \in M$, by
 
\begin{equation*}
\varphi_{L_{0}}([\mu]) = {{\mathcal{T}}_{\mu}}^{*}L_{0} \otimes {L_{0}}^{*}, 
\hspace*{30pt} \forall \mu \in V, \vspace*{8pt}
\end{equation*}

\noindent and the natural lifting map\, $\widetilde{\varphi_{L_{0}}} : V \rightarrow {%
\overline{V}}^{\, *}$ of $\varphi_{L_{0}}$. In general, $\varphi_{L_{0}}$ is not an isomorphism
unless $\delta =1$. 

The following property is well-known:

\begin{property}
\label{prop1}$\widetilde{\varphi_{L_{0}}} : V\rightarrow {%
\overline{V}}^{\, *}$ is a complex linear transformation such that 
\begin{equation}
\widetilde{\varphi_{L_{0}}}(e_{1} ) = e_{1}^{*}  \label{eqn 17}
\end{equation}
\end{property}

\begin{proof}
Let us go back to the map
\begin{equation}
H_{\overline{\partial}}^{0,1}(M) \overset{\delta}{\longrightarrow} H^{1}(M, \mathcal{O}) \overset{p}{\longrightarrow} \frac{H^{1}(M, \mathcal{O})}{H^{1}(M, \mathbb{Z})}  \cong  {\rm Pic}^{0}(M)
\end{equation}
where $\delta$ is the Dolbeault isomorphism and $p$ is the projection. For any $\alpha =\sigma\, d \overline{z} \in H_{\overline{\partial}}^{0,1}(M)$,\ $p \circ \delta$ sends $\alpha$ to the line bundle given by the multipliers
\begin{equation} \label{eqn19}
\begin{cases}  e_{\lambda_{1}}(z) = e^{2 \pi i \sigma \delta} \\ e_{\lambda_{2}}(z) = e^{2 \pi i \sigma \overline{\tau}}. \end{cases} \hspace*{170pt}
\end{equation}

\noindent  Note that this choice of line bundles is dual to
the one given in \cite[p. 315-316]{GH}.  

Multiplying the trivializations by the function $f(z)=e^{\,- 2 \pi i \sigma z}$ yields the normalized multipliers
\begin{equation} \label{eqn20}
\begin{cases}  e_{\lambda_{1}}(z) \equiv 1 \\ e_{\lambda_{2}}(z) = e^{4 \pi \sigma \tau_{2}}. \end{cases} \hspace*{170pt}
\end{equation}

\noindent  On the other hand, the multipliers of ${{\mathcal{T}}_{\mu}}^{*}L_{0} \otimes {L_{0}}^{*}$
are, via $(\ref{eqnL0})$ and $(\ref{eqnL})$,
\begin{equation}  \label{eqn21}
\begin{cases}  e_{\lambda_{1}}(z) \equiv 1 \\ e_{\lambda_{2}}(z) = e^{- 2 \pi i \mu}. \end{cases} \hspace*{170pt}
\end{equation}

\noindent Plugging $\mu =1$ ($\mu e_{1}= e_{1}$) into $(\ref{eqn21})$ and setting
$\alpha =e_{1}^{*}= \frac{-i}{2 \tau_{2}}\, d\overline{z}=\sigma\, d\overline{z}$ in $(\ref{eqn20})$, one obtains

\begin{equation*}
(\ref{eqn20})=(\ref{eqn21}),
\end{equation*}

\noindent hence $(\ref{eqn 17})$.  We omit the proof that  $\widetilde{\varphi_{L_{0}}}$ is complex linear.
\end{proof}

We should also recall the \textbf{Poincar\'e line bundle}.
Let $\hat{\mu}=\hat{\mu}_{1}+ i \hat{\mu}_{2}$ be the complex coordinate on $%
{\overline{V}}^{\, *}$ (and on $\widehat{M}$) such that $d \hat{\mu}$ is
dual to $e_{1}^{*}$. As previously, an element $\hat{\mu}\, e_{1}^{*}\in {%
\overline{V}}^{\, *}$ is interchangeably written as $\hat{\mu} \in {%
\overline{V}}^{\, *}$. We denote the line bundle corresponding to $[\hat{\mu}%
] \in \widehat{M} = \mathrm{Pic}^{0}(M)$ in $(\ref{Pic0})$ by $\mathrm{P}_{[\hat{\mu}]}$ or $%
\mathrm{P}_{\hat{\mu}}$ if there is no danger of confusion. By Property $\ref{prop1}$
above, we can also write 
\begin{equation}  \label{Pmu}
\mathrm{P}_{\hat{\mu}}=\mathrm{P}_{\varphi_{L_{0}}([\mu])} \cong {{\mathcal{T}}_{\mu}}%
^{*}L_{0} \otimes {L_{0}}^{*}, \hspace*{20pt} \forall\, \mu \in V, \vspace*{%
8pt}
\end{equation}
where $\mu$ and $\hat\mu$ are related by $\widetilde{\varphi_{L_{0}}}(\mu\,
e_1)=\hat\mu\, e_1^*$. The following lemma is standard. 

\begin{lemma}
\label{Poincare1} There is a unique holomorphic line bundle $%
\mathrm{P} \rightarrow M \times \widehat{M}$ called the Poincar\'e line bundle
satisfying : \vspace*{4pt} \newline
$(1)\ \mathrm{P}_{\mid_{M \times \{ \hat{\mu} \} } } \cong \mathrm{P}_{\hat{\mu} }$.\vspace*{%
-4pt} \newline
$(2)\ \mathrm{P}_{\mid_{\{0\} \times \widehat{M}}}$ is a holomorphically trivial line
bundle. 
\end{lemma}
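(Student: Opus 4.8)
The plan is to prove existence by writing down an explicit system of multipliers (factor of automorphy) for $\mathrm{P}$ on the universal cover $V \times \overline{V}^{\,*}$ relative to the lattice $\Lambda \times \overline{\Lambda}^{\,*}$, and to deduce uniqueness from the seesaw principle. Throughout I write a point of $M \times \widehat{M}$ as $(z,\hat\mu)$ and use the identification $\mu = \hat\mu$ coming from $\widetilde{\varphi_{L_0}}(e_1) = e_1^*$ in Property~\ref{prop1}.

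For existence I would attach multipliers to the four generators of $\Lambda \times \overline{\Lambda}^{\,*}$: the two $z$-translations $z \mapsto z + \delta$ (by $\lambda_1$) and $z \mapsto z + \tau$ (by $\lambda_2$), and the two $\hat\mu$-translations $\hat\mu \mapsto \hat\mu + 1$ (by $e_1^*$) and $\hat\mu \mapsto \hat\mu + \tfrac{\tau}{\delta}$ (by $\tfrac{\tau}{\delta}e_1^*$). Condition~$(1)$ dictates that the $z$-multipliers reproduce those of $\mathrm{P}_{\hat\mu}$ from $(\ref{eqn21})$, so I set $e_{\lambda_1} \equiv 1$ and $e_{\lambda_2}(z,\hat\mu) = e^{-2\pi i \hat\mu}$; it then remains to choose the two $\hat\mu$-multipliers so that the compatibility relations $(\ref{compatibility})$ hold on $M \times \widehat{M}$ and so that $(2)$ is satisfied.

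Solving $(\ref{compatibility})$ pair by pair is where the real content lies. The generator $\hat\mu \mapsto \hat\mu + 1$ is compatible with the trivial multiplier $e_{e_1^*} \equiv 1$, since $e^{-2\pi i} = 1$ absorbs the shift in $e_{\lambda_2}$. The generator $\hat\mu \mapsto \hat\mu + \tfrac{\tau}{\delta}$, however, cannot carry a constant multiplier: its compatibility with $e_{\lambda_2} = e^{-2\pi i \hat\mu}$ forces the quasi-periodicity $e(z+\tau) = e^{-2\pi i\tau/\delta}\,e(z)$ together with $\delta$-periodicity in $z$, whose simplest holomorphic solution is $e_{\tau e_1^*/\delta}(z,\hat\mu) = e^{-2\pi i z/\delta}$. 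This $z$-dependent ``mixing'' factor is precisely the nontrivial geometry of the Poincar\'e bundle, and I expect the bookkeeping of verifying all $\binom{4}{2}$ compatibility relations for these four (entire, nowhere-vanishing, hence holomorphic) multipliers to be the main, if elementary, obstacle. Granting this, condition~$(1)$ holds by construction, and condition~$(2)$ is immediate: at $z = 0$ both $\hat\mu$-multipliers equal $1$, so $\mathrm{P}|_{\{0\}\times\widehat{M}}$ is given by the trivial cocycle and is holomorphically trivial.

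For uniqueness, suppose $\mathrm{P}$ and $\mathrm{P}'$ both satisfy $(1)$ and $(2)$ and set $\mathrm{Q} := \mathrm{P} \otimes (\mathrm{P}')^{*}$. By $(1)$ each restriction $\mathrm{Q}|_{M \times \{\hat\mu\}} \cong \mathrm{P}_{\hat\mu} \otimes \mathrm{P}_{\hat\mu}^{*}$ is trivial, and by $(2)$ so is $\mathrm{Q}|_{\{0\}\times\widehat{M}}$. Since $M$ is compact and connected, the seesaw principle gives $\mathrm{Q} \cong \pi_2^{*}\mathcal{M}$ for a line bundle $\mathcal{M}$ on $\widehat{M}$; restricting to $\{0\}\times\widehat{M}$ identifies $\mathcal{M}$ with the (trivial) restriction there, so $\mathcal{M}$, and hence $\mathrm{Q}$, are trivial, giving $\mathrm{P} \cong \mathrm{P}'$. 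The only non-formal ingredient here is the seesaw principle itself, which I would invoke as a standard fact for complete varieties rather than reprove.
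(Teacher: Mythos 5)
Your proposal is correct, and it does more than the paper does at this point: the paper simply declares Lemma \ref{Poincare1} to be standard and offers no proof, deferring (implicitly, and explicitly later in the proof of Proposition \ref{proposition 1}) to the uniqueness argument of Griffiths--Harris. Your explicit factor-of-automorphy construction checks out. With the identification $\mu=\hat\mu$ from Property \ref{prop1}, the four multipliers $e_{\lambda_1}\equiv 1$, $e_{\lambda_2}=e^{-2\pi i\hat\mu}$, $e_{e_1^*}\equiv 1$, $e_{\tau e_1^*/\delta}=e^{-2\pi i z/\delta}$ do satisfy all six compatibility relations of type $(\ref{compatibility})$ on $\Lambda\times\overline{\Lambda}^{\,*}$ (the only nontrivial pairs are $(\lambda_1,\tau e_1^*/\delta)$, where $e^{-2\pi i(z+\delta)/\delta}=e^{-2\pi i z/\delta}$, and $(\lambda_2,\tau e_1^*/\delta)$, where both sides equal $e^{-2\pi i\hat\mu}e^{-2\pi i\tau/\delta}e^{-2\pi i z/\delta}$); the restriction to $M\times\{\hat\mu\}$ reproduces $(\ref{eqn21})$, and the $z$-dependent mixing factor is exactly what makes $\mathrm{P}_{\hat\mu}\cong\mathrm{P}_{\hat\mu+\tau/\delta}$ consistently, so condition $(1)$ is well posed on classes $[\hat\mu]$. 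Your multiplier $e^{-2\pi i z/\delta}$ is also consistent with the paper's later formula $(\ref{idphiP})$ for $(Id\times\varphi_{L_0})^*\mathrm{P}$, since $\varphi_{L_0}$ sends $\lambda_2=\tau e_1$ to $\tau e_1^*=\delta\cdot(\tau e_1^*/\delta)$ and composing the multiplier $\delta$ times yields $e^{-2\pi i z}$, matching $e_{\lambda_{02}}$. The seesaw argument for uniqueness is the standard one (and legitimate here, $M$ being an elliptic curve), and is essentially the GH argument the paper invokes. What your route buys is a self-contained, explicitly computable model of $\mathrm{P}$ in the same multiplier language the paper uses everywhere else; what the paper's citation buys is brevity, at the cost of leaving the consistency between $(\ref{eqn21})$, $(\ref{idphiP})$ and the abstract $\mathrm{P}$ to the reader.
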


\section{A holomorphic line bundle $\protect\widetilde{K}
\rightarrow M_{1} \times M_{2} \protect\cong M \times M$}

\label{section 3} 

As explained in the second half of Introduction, we would
like to "accomodate" the $\mu$-dependent theta functions $\theta_{m}(z,\mu )$ of previous sections.
For this need, we introduce an intermediate line bundle $\widetilde{K}$ in
this section. Let $M_{1} \cong M_{2} \cong M$ and $\pi : V \times V
\rightarrow M_{1} \times M_{2} = V / \Lambda \times V / \Lambda $ with
projections $\pi_{i}: M_{1} \times M_{2} \rightarrow M_{i},\ i=1,2.$ We
denote by $\lambda_{10}=\lambda_1$ and $\lambda_{20}=\lambda_2$ for the
first lattice and $\lambda_{01}=\lambda_1$ and $\lambda_{01}=\lambda_2$ for
the second one. Recall the map $\varphi_{L_{0}}:M\to \hat M$ in the
preceding section, and form $Id \times \varphi_{L_{0}}:M\times M\to
M\times\hat M$. 

\begin{definition}
\label{def 1} We define the holomorphic line bundle $%
\widetilde{K} : = \pi^{*}_{1} L_{0} \otimes (Id \times \varphi_{L_{0}})^{*}\mathrm{P}
\otimes \pi^{*}_{2} L_{0} \rightarrow M_{1} \times M_{2}$ where $\mathrm{P} \rightarrow
M \times \hat{M}$ is the Poincar\'e line bundle. 
\end{definition}

\begin{proposition}
\label{proposition 1} In notations as above, a system of
multipliers of $\widetilde{K}$ can be 
\begin{equation}  \label{multiK}
\begin{cases}
e_{\lambda_{10}}(z, \mu)\equiv 1 , & e_{\lambda_{01}}(z, \mu)\equiv 1 \\ 
e_{\lambda_{20}}(z, \mu) = e^{- 2 \pi i z - 2 \pi i \mu- \pi i \tau}, & 
e_{\lambda_{02}}(z, \mu) = e^{- 2 \pi i z - 2 \pi i \mu- \pi i \tau}.%
\end{cases}%
\end{equation}
\end{proposition}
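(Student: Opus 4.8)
The plan is to exploit the functoriality of multiplier systems: under a tensor product the multipliers multiply, and under a pullback $f^{*}$ they are precomposed with $f$, the relevant lattice being the one induced by $f$. Since $\widetilde K=\pi_{1}^{*}L_{0}\otimes(Id\times\varphi_{L_{0}})^{*}\mathrm{P}\otimes\pi_{2}^{*}L_{0}$, I would compute the four multipliers $e_{\lambda_{10}},e_{\lambda_{20}},e_{\lambda_{01}},e_{\lambda_{02}}$ of each of the three factors separately and then take the pointwise product. Writing $(z,\mu)$ for the coordinate on $V\times V$, the two projection factors are immediate from $(\ref{eqnL0})$: the bundle $\pi_{1}^{*}L_{0}$ depends only on $z$, contributing $e_{\lambda_{20}}=e^{-2\pi i z-\pi i\tau}$ and $1$ for the remaining three generators, while $\pi_{2}^{*}L_{0}$ depends only on $\mu$, contributing $e_{\lambda_{02}}=e^{-2\pi i\mu-\pi i\tau}$ and $1$ otherwise.

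The substance lies in the middle factor $(Id\times\varphi_{L_{0}})^{*}\mathrm{P}$. For its first-factor direction I would restrict to a slice $M\times\{[\mu]\}$: by Property $\ref{prop1}$ the lift $\widetilde{\varphi_{L_{0}}}$ is complex linear with $\widetilde{\varphi_{L_{0}}}(e_{1})=e_{1}^{*}$, so $\widetilde{\varphi_{L_{0}}}(\mu e_{1})=\mu e_{1}^{*}$ and the dual coordinate is simply $\hat\mu=\mu$; the slice therefore carries $\mathrm{P}_{\hat\mu}\cong\mathcal{T}_{\mu}^{*}L_{0}\otimes L_{0}^{*}$ by $(\ref{Pmu})$, whose multipliers are recorded in $(\ref{eqn21})$. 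This gives $e_{\lambda_{10}}=1$ and $e_{\lambda_{20}}=e^{-2\pi i\mu}$, the latter being constant in $z$.

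The main obstacle is the second-factor (parameter) direction, because the universal property $\ref{Poincare1}(1)$ as usually stated controls $\mathrm{P}$ only along the slices $M\times\{\hat\mu\}$ and so sees only the first direction. I would recover $e_{\lambda_{01}},e_{\lambda_{02}}$ by feeding the already-known $e_{\lambda_{10}}=1$ and $e_{\lambda_{20}}=e^{-2\pi i\mu}$ into the compatibility relations $(\ref{compatibility})$ for the commuting pairs $(\lambda_{10},\lambda_{0j})$ and $(\lambda_{20},\lambda_{0j})$: these force $e_{\lambda_{02}}$ to be $\lambda_{1}$-periodic in $z$ and to pick up the factor $e^{-2\pi i\tau}$ under $z\mapsto z+\lambda_{2}$, and they likewise force $e_{\lambda_{01}}$ to be doubly periodic in $z$. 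Since a nowhere-vanishing holomorphic function on $V$ with such transformation laws descends to a nowhere-zero holomorphic function on a torus, it is determined up to a constant, and that constant is pinned down by the normalization $\ref{Poincare1}(2)$: restricting to $\{[0]\}\times M$ must give the trivial bundle, i.e. $e_{\lambda_{01}}(0,\mu)=e_{\lambda_{02}}(0,\mu)=1$. This yields $e_{\lambda_{01}}=1$ and $e_{\lambda_{02}}=e^{-2\pi i z}$. As a consistency check, the two nontrivial multipliers $e^{-2\pi i\mu}$ and $e^{-2\pi i z}$ of this factor satisfy $(\ref{compatibility})$ for $(\lambda_{20},\lambda_{02})$, both sides equaling $e^{-2\pi i z-2\pi i\mu-2\pi i\tau}$.

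Finally I would multiply the three contributions. For $\lambda_{10}$ and $\lambda_{01}$ all factors give $1$; for $\lambda_{20}$ the product is $e^{-2\pi i z-\pi i\tau}\cdot e^{-2\pi i\mu}\cdot 1$, and for $\lambda_{02}$ it is $1\cdot e^{-2\pi i z}\cdot e^{-2\pi i\mu-\pi i\tau}$, both equal to $e^{-2\pi i z-2\pi i\mu-\pi i\tau}$. This reproduces exactly the system $(\ref{multiK})$, and one verifies once more that the total system satisfies $(\ref{compatibility})$, confirming that it represents $\widetilde K$.
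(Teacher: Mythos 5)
Your decomposition of $\widetilde K$ into its three tensor factors, the treatment of $\pi_1^*L_0$ and $\pi_2^*L_0$, and the final multiplication all match the paper, and the system you arrive at for the middle factor $(Id\times\varphi_{L_0})^*\mathrm{P}$ is exactly the paper's $(\ref{idphiP})$. The difference is in how that middle system is justified, and there your argument has a genuine logical gap: twice you pass from ``the restriction of the bundle to a subtorus is \emph{isomorphic} to the bundle with multipliers $X$'' to ``the multipliers of the bundle \emph{restrict} to $X$.'' A system of multipliers is only determined up to the equivalence $e_\lambda\mapsto \frac{g(\cdot+\lambda)}{g}\,e_\lambda$, so the slice condition $(\ref{eqn21})$ does not by itself give $e_{\lambda_{10}}\equiv 1$, $e_{\lambda_{20}}=e^{-2\pi i\mu}$ for a single global system holomorphic in $(z,\mu)$ --- that requires normalizing all the slices simultaneously and holomorphically in $\mu$, which is essentially the content of the see--saw/uniqueness theorem you are implicitly using. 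Likewise, holomorphic triviality of the restriction to $\{0\}\times M$ does not force $e_{\lambda_{01}}(0,\mu)=e_{\lambda_{02}}(0,\mu)=1$; it only says the restricted system is \emph{equivalent} to the trivial one, and one must then perform a further modification by a function $g(\mu)$ (harmless to the $e_{\lambda_{i0}}$ since it is independent of $z$) to kill the residual $\mu$-dependent constants. Your closing remark that verifying the compatibility relations ``confirms that it represents $\widetilde K$'' also overstates what that check gives: compatibility only shows the system defines \emph{some} line bundle, not that it is the right one.

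Both gaps are repairable, and the cleanest repair is exactly the paper's route, which runs your derivation in reverse: posit the candidate system $(\ref{idphiP})$, check the compatibility relations so that it defines a line bundle $J$ on $M_1\times M_2$, observe that $J$ restricts on each $M\times\{\mu\}$ to a bundle with multipliers $(\ref{multiMmu})$, hence isomorphic to $\mathcal{T}_\mu^*L_0\otimes L_0^*\cong \mathrm{P}_{\varphi_{L_0}(\mu)}$, and is trivial on $\{0\}\times M$, and then invoke the uniqueness argument for the Poincar\'e bundle (\cite[p.~329]{GH}) to conclude $J\cong(Id\times\varphi_{L_0})^*\mathrm{P}$. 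That way the only nontrivial input is the see--saw uniqueness statement, cited once, rather than being smuggled into two normalization steps. If you prefer to keep the derivational form, you must either prove that the slice-wise normalizing functions $g_\mu(z)$ can be chosen holomorphically in $\mu$ on $V\times V$ (e.g.\ by fixing $g_\mu(0)=1$) or state explicitly that you are only determining the system up to equivalence, which is all the proposition's ``can be'' requires.
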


\begin{proof}
Recall that a holomorphic line bundle on $M \times M= V / \Lambda \times V / \Lambda $
is essentially described by a set of data: a system of multipliers $\{ e_{\lambda_{10}}$, $e_{\lambda_{20}}$, $e_{\lambda_{01}}$, $e_{\lambda_{02}} \in \mathcal{O}^{*}(V \times V) \}$ satisfying the \textbf{compatibility relations} (cf.(\ref{compatibility})) : for $\{i,j\}=\{1,2\}$
\begin{equation}  \label{compatibility2}
\begin{cases}  e_{\lambda_{i0}}(z+\lambda_{j}, \mu)\, e_{\lambda_{j0}}(z, \mu)  = e_{\lambda_{j0}}(z+\lambda_{i}, \mu) \,  e_{\lambda_{i0}}(z, \mu) \\
e_{\lambda_{i0}}(z, \mu+\lambda_{j})\, e_{\lambda_{0j}}(z, \mu)= e_{\lambda_{0j}}(z+\lambda_{i}, \mu)\, e_{\lambda_{i0}}(z, \mu) \\
e_{\lambda_{0i}}(z, \mu +\lambda_{j})\, e_{\lambda_{0j}}(z, \mu)= e_{\lambda_{0j}}(z, \mu +\lambda_{i})\, e_{\lambda_{0i}}(z, \mu).\\
\end{cases}
\end{equation}

  To break things down, the multipliers of $\pi_{1}^{*}L_{0}$ can be
 \begin{equation} \label{multipi1L0}
\begin{cases}  e_{\lambda_{10}}(z, \mu) \equiv 1,
&e_{\lambda_{20}}(z, \mu) = e^{- 2 \pi i z - \pi i \tau} \\
e_{\lambda_{01}}(z, \mu) \equiv 1,
&e_{\lambda_{02}}(z, \mu) \equiv 1  \end{cases}  \hspace*{80pt}
\end{equation}

\noindent and the multipliers of $\pi_{2}^{*}L_{0}$ can be similarly expressed. As we will see soon, a system of
multipliers of $(Id \times \varphi_{L_{0}})^{*}{\rm P}$ can be chosen to be
\begin{equation}  \label{idphiP}
\begin{cases}  e_{\lambda_{10}}(z, \mu) \equiv 1,
&e_{\lambda_{20}}(z, \mu) = e^{-2 \pi i \mu} \\
e_{\lambda_{01}}(z, \mu) \equiv 1,
&e_{\lambda_{02}}(z, \mu) = e^{-2 \pi i z}. \end{cases}  \hspace*{90pt}
\end{equation}
Obviously all these multipliers satisfy $(\ref{compatibility2})$.  So
$(\ref{idphiP})$ does define a holomorphic line bundle, tentatively denoted by $J$, on $M_1\times M_2$.

  To see the above claim $(\ref{idphiP})$, note first that a system of multipliers of
  
$$(Id \times \varphi_{L_{0}})^{*}{\rm P}_{\mid_{M \times \{\mu\}}} \cong {{\mathcal{T}}_{\mu}}^{*}L_{0}
 \otimes {L_{0}}^{*} \cong {\rm P}_{\varphi_{L_{0}}(\mu)} \rightarrow M_{1} \times M_{2}$$ 
 
\noindent can be

\begin{equation} \label{multiMmu}
\begin{cases}  e_{\lambda_{1}}(z) \equiv 1 \\
e_{\lambda_{2}}(z) = e^{-2 \pi i \mu}  \end{cases}  \hspace*{185pt}
\end{equation}

\noindent and that of the trivial line bundle $(Id \times \varphi_{L_{0}})^{*}{\rm P}_{\mid_{\{0 \} \times M}}$

\begin{equation} \label{multi0M}
 e_{\lambda_{1}}(\mu) =
e_{\lambda_{2}}(\mu) \equiv 1.   \hspace*{160pt} \vspace*{4pt}
\end{equation}

\noindent One observes that $(\ref{idphiP})$ or $J$ satisfies $(\ref{multiMmu})$ and $(\ref{multi0M})$. The claim that

$$J\cong (Id \times \varphi_{L_{0}})^{*}{\rm P}$$ 

\noindent follows from the same type of arguments of \cite[p. 329]{GH}
for the proof of the uniqueness of Poincar\'e line bundle.  Our claim $(\ref{idphiP})$ is proved.

Finally, $(\ref{multiK})$ follows from $(\ref{multipi1L0})$ (for $\pi_1^*L_0$ and similarly for $\pi_2^*L_0$)
and $(\ref{idphiP})$.
\end{proof}

By Proposition $\ref{proposition 1}$, any global
holomorphic sections $\tilde{\theta}$ of $\widetilde{K} \rightarrow M_{1}
\times M_{2}$ can be represented by quasi-periodic holomorpic functions on $%
V \times V$ satisfying, for all $z, \mu \in V$,
 
\begin{align}  \label{K}
\begin{cases}
\theta (z+ \lambda_{1}, \mu)= \theta(z, \mu) = \theta (z, \mu + \lambda_{1})
\\ 
\theta (z+ \lambda_{2}, \mu)= e^{- 2 \pi i z -2 \pi i \mu - \pi i \tau}\
\theta(z, \mu)= \theta (z, \mu + \lambda_{2})%
\end{cases}
\end{align}

\noindent and any Hermitian metric $h(z, \mu)$ on $\widetilde{K} \rightarrow
M_{1} \times M_{2}$: 
\begin{align}  \label{K metric}
\begin{cases}
h(z+ \lambda_{1}, \mu)= h(z, \mu)=h(z, \mu + \lambda_{1}) \\ 
h (z+ \lambda_{2}, \mu)= e^{-4 \pi z_{2} - 4 \pi \mu_{2} - 2 \pi \tau_{2}}\,
h(z, \mu) = h (z, \mu + \lambda_{2} ).%
\end{cases}
\\
\notag
\end{align}

An application of Proposition \ref{proposition 1} is to
exploit those $\mu$-dependent theta functions $\theta_{m}(z, \mu)$. Recall
that in Lemma \ref{Lemma 2}, $\{ \theta_{m}(z, \mu) \}_{m}$ represents a
basis of the global holomorphic sections of $L_{\mu}$ for each individual $%
\mu \in V$. As $\mu$ varies, it seems tempting to think that $\{
\theta_{m}(z, \mu) \}_{m}$ naturally extends the sections $\{ \theta_{m}(z,
0) \}_{m}$ of $L_0$ via the Poincar\'e line bundle along the $\mu$%
-direction. This is not quite the case, however. 

Indeed, a global property that this family of functions $\{
\theta_{m}(z, \mu) \}_{m}$ possess is the following. \vspace*{8pt} 

\begin{theorem}
\label{theorem 1} For the holomorphic line bundle $%
\widetilde{K} = \pi^{*}_{1} L_{0} \otimes (Id \times \varphi_{L})^{*}\mathrm{P}
\otimes \pi^{*}_{2} L_{0} \rightarrow M_{1} \times M_{2}$, one has the
quasi-periodic holomorphic functions on $V \times V$ 
\begin{equation}  \label{K formula}
\theta_{m}(z, \mu) = \underset{k \in \mathbb{Z}}{\Sigma}\, e^{\pi i k^{2}
\tau}e^{2 \pi i \tau \frac{m}{\delta} k} e^{2 \pi i \frac{(k \delta + m)}{%
\delta} (z+\mu) }, \ \mbox{\small \  $m=0, 1,..., \delta -1$,}
\end{equation}
as a basis of global holomorphic sections of $\widetilde{K}$, and 
\begin{equation}  \label{K metric formula}
h(z, \mu) = e^{\frac{- 2 \pi}{\tau_{2}}(z_{2} + \mu_{2})^{2}} \hspace*{120pt}
\end{equation}
as a metric on $\widetilde{K}$, which on the restriction $\widetilde{K}%
_{\mid_{M \times \{ \mu \}}}$ induces the metric $h_{L_{\mu}}$ in $(\ref%
{lemhL})$. \vspace*{8pt} 
\end{theorem}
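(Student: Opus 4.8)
The plan is to reduce every assertion to the one-variable statements of Lemma \ref{Lemma 1} and Lemma \ref{Lemma 2}, exploiting the fact that both the proposed sections and the proposed metric depend on $(z,\mu)$ only through the sum $z+\mu$. Writing $\theta_m(z,\mu)=\theta_m(z+\mu)$ and $h(z,\mu)=h_{L_0}(z+\mu)$, where $\theta_m$ and $h_{L_0}$ are the one-variable objects of Lemma \ref{Lemma 1}, I would first verify that these functions satisfy the quasi-periodicity relations (\ref{K}) and (\ref{K metric}) that characterize, by Proposition \ref{proposition 1}, the global holomorphic sections and Hermitian metrics of $\widetilde{K}$. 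Since translating either $z$ or $\mu$ by $\lambda_j$ shifts the common argument $w=z+\mu$ by $\lambda_j$, the one-variable relations (\ref{eqn3}) and (\ref{eqn4}) transport directly to the two-variable relations. The one computational point worth isolating is that under $\mu\mapsto\mu+\lambda_2$ the automorphy factor $e^{-2\pi i(z+\mu)-\pi i\tau}$ produced by (\ref{eqn3}) equals exactly $e^{-2\pi i z-2\pi i\mu-\pi i\tau}$, so the $z$- and $\mu$-directions are governed by the same multiplier of $\widetilde{K}$; the analogous matching occurs for the metric with factor $e^{-4\pi z_2-4\pi\mu_2-2\pi\tau_2}$.

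With this substitution in hand, the metric assertion is immediate: positivity and smoothness of $h(z,\mu)=h_{L_0}(z+\mu)$ are clear, the relations (\ref{K metric}) follow from (\ref{eqn4}), and the restriction to $M\times\{\mu\}$ gives $h(z,\mu)=h_{L_0}(z+\mu)=h_{L_\mu}(z)$ of (\ref{lemhL}) tautologically. For the sections, joint holomorphicity in $(z,\mu)$ and convergence of the series are inherited from the one-variable case, the relations (\ref{K}) follow from (\ref{eqn3}), and so each $\theta_m(z,\mu)$ is a genuine global holomorphic section of $\widetilde{K}$. They are linearly independent, since their restrictions to any single fiber $M\times\{\mu\}$ already form the basis of $H^0(L_\mu)$ furnished by Lemma \ref{Lemma 2}.

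The one genuinely non-routine step, which I expect to be the main obstacle, is to show that these $\delta$ sections \emph{span} $H^0(M_1\times M_2,\widetilde{K})$, i.e. that $\widetilde{K}$ admits no further global sections. Here I would take an arbitrary global section $\theta(z,\mu)$; for each fixed $\mu$ its restriction lies in $H^0(L_\mu)$, so by Lemma \ref{Lemma 2} it has a unique expansion $\theta(z,\mu)=\sum_{m}c_m(\mu)\,\theta_m(z+\mu)$. Evaluating at $\delta$ generic points of $M_1$ and inverting the resulting matrix $(\theta_m(z_j+\mu))_{j,m}$, whose entries are holomorphic in $\mu$ and whose determinant is generically nonzero, shows that the coefficients $c_m(\mu)$ are holomorphic in $\mu$. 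Substituting the expansion into the $\mu$-quasi-periodicity in (\ref{K}) and using once more that $\theta_m(z+\mu+\lambda_j)$ reproduces precisely the defining multiplier of $\widetilde{K}$, all automorphy factors cancel, leaving $c_m(\mu+\lambda_1)=c_m(\mu)$ and $c_m(\mu+\lambda_2)=c_m(\mu)$. Thus each $c_m$ is a $\Lambda$-periodic holomorphic function on $V$, hence a holomorphic function on the compact torus $M_2$, hence constant. This forces $\theta=\sum_m c_m\theta_m$ with constant $c_m$ and completes the proof that $\{\theta_m(z,\mu)\}_{m=0}^{\delta-1}$ is a basis. I would note that the span could alternatively be obtained by a dimension count on the direct image $\pi_{2*}\widetilde{K}$, but the coefficient argument above is self-contained and makes transparent why the cancellation of automorphy factors is exactly what is needed.
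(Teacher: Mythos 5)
Your proposal is correct, and its first half is exactly the paper's own argument: the paper sets $\omega:=z+\mu$ and simply observes that $(\ref{K formula})$ and $(\ref{K metric formula})$ satisfy the quasi-periodicity relations $(\ref{K})$ and $(\ref{K metric})$ characterizing sections and metrics of $\widetilde{K}$, after which it declares ``the theorem follows.'' Where you genuinely go beyond the paper is the spanning claim, i.e.\ that $\dim H^{0}(M_{1}\times M_{2},\widetilde{K})=\delta$ so that the $\theta_{m}(z+\mu)$ exhaust all global sections; the paper leaves this implicit. Your coefficient argument is sound: expanding an arbitrary section as $\sum_{m}c_{m}(\mu)\,\theta_{m}(z+\mu)$ fiberwise via Lemma \ref{Lemma 2}, extracting holomorphy of the $c_{m}$ by evaluating at $\delta$ points where the matrix $(\theta_{m}(z_{j}+\mu))$ is invertible (locally near each $\mu_{0}$, which suffices since the $c_{m}$ are uniquely determined), and then cancelling the identical automorphy factors in the $\mu$-direction to conclude that each $c_{m}$ is $\Lambda$-periodic, hence constant. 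This buys a self-contained completeness proof that the paper omits. A slightly slicker route to the same dimension count, implicit in the substitution $\omega=z+\mu$, is to note from $(\ref{multiK})$ that $\widetilde{K}\cong s^{*}L_{0}$ for the sum map $s:M\times M\to M$, $s(z,\mu)=z+\mu$; since $s$ has connected compact fibers, $s_{*}\mathcal{O}_{M\times M}=\mathcal{O}_{M}$ and the projection formula gives $H^{0}(M\times M,\widetilde{K})\cong H^{0}(M,L_{0})$, of dimension $\delta$, with the basis $\{\theta_{m}\}$ of Lemma \ref{Lemma 1} pulling back precisely to $(\ref{K formula})$. Either way, your argument closes a gap in the paper's one-line proof rather than diverging from its approach.
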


\begin{proof}
  Let $\omega := z + \mu$. By using the quasi-periodic property  of $\theta_{m}(\omega)$ and $h_{L_{0}}(\omega)$ in  $(\ref{theta section})$ and $(\ref{hLmu})$, we see that the functions $(\ref{K formula})$ and $(\ref{K metric formula})$ satisfy $(\ref{K})$ and $(\ref{K metric})$. The theorem follows. 
\end{proof}

We can now equip the line bundle 
\begin{equation*}
(Id \times \varphi_{L_{0}})^{*}\mathrm{P} \rightarrow M_{1} \times M_{2}
\end{equation*}
with a metric. Since $\widetilde{K} = \pi^{*}_{1} L_{0} \otimes (Id \times
\varphi_{L})^{*}\mathrm{P} \otimes \pi^{*}_{2} L_{0} \rightarrow M_{1} \times M_{2}$,
by the metric $h(z, \mu)$ on $\widetilde{K}$ (cf. $(\ref{K metric formula})$%
) and the metric $h_{L_{0}}(z)$ (cf. $(\ref{hLo})$), one finds the induced
metric 
\begin{equation}  \label{metric IdPhiP}
h_{(Id \times \varphi_{L_{0}})^{*}\mathrm{P}}(z, \mu)= e^{-\frac{4 \pi }{\tau_{2}}%
z_{2} \mu_{2}}
\end{equation}
on $(Id \times \varphi_{L_{0}})^{*}\mathrm{P}$. Let's now calculate the curvature of
this metric. 

\begin{theorem}
\label{theorem 2} The curvature of the metric in $(\ref%
{metric IdPhiP})$ is 
\begin{equation}  \label{curvP}
\Theta_{(Id \times \varphi_{L_{0}})^{*}\mathrm{P}} (z, \mu) = \frac{\pi}{\tau_{2}}\, %
\big( dz \wedge d \overline{\mu} + d \mu \wedge d \overline{z} \big).
\end{equation}
\end{theorem}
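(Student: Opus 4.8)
The plan is to compute the curvature directly from the standard formula for the Chern connection of a Hermitian holomorphic line bundle, since $(Id \times \varphi_{L_{0}})^{*}\mathrm{P}$ has rank one and the metric in $(\ref{metric IdPhiP})$ is presented as a positive weight $h(z,\mu)=e^{-\frac{4\pi}{\tau_{2}}z_{2}\mu_{2}}$ relative to a holomorphic frame. For such a line bundle the curvature of the Chern connection is the $(1,1)$-form
\[
\Theta = \bar\partial\partial \log h = -\,\partial\bar\partial \log h ,
\]
so the entire computation reduces to differentiating $\log h = -\frac{4\pi}{\tau_{2}}z_{2}\mu_{2}$ twice. I would fix this convention so that it is consistent with the normalization $c_{1}=\frac{i}{2\pi}\Theta$ used later (this is what makes the $\Theta=(2\pi i)\omega$ of Theorem \ref{mainTheo} and $c_{1}=-\delta\omega$ compatible).

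First I would rewrite $\log h$ in terms of the holomorphic coordinates $z,\mu$ and their conjugates. Using $z_{2}=(z-\overline{z})/2i$ and $\mu_{2}=(\mu-\overline{\mu})/2i$ one gets $z_{2}\mu_{2}=-\tfrac{1}{4}(z-\overline{z})(\mu-\overline{\mu})$, hence
\[
\log h = \frac{\pi}{\tau_{2}}\,(z-\overline{z})(\mu-\overline{\mu}).
\]
The purpose of this step is purely presentational: once $\log h$ is a polynomial in the four real-analytic coordinates $z,\overline{z},\mu,\overline{\mu}$, every derivative is elementary and there is no convergence or regularity issue to check, the weight being smooth and (by construction via $\widetilde K$ in Proposition \ref{proposition 1} and the splitting $\widetilde{K}=\pi^{*}_{1}L_{0}\otimes(Id\times\varphi_{L})^{*}\mathrm{P}\otimes\pi^{*}_{2}L_{0}$) globally well defined on $M_{1}\times M_{2}$.

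Then I would apply $\bar\partial\partial$ term by term. Since $\partial$ only sees $z,\mu$, one has $\partial\big[(z-\overline{z})(\mu-\overline{\mu})\big]=(\mu-\overline{\mu})\,dz+(z-\overline{z})\,d\mu$; applying $\bar\partial$, which differentiates only the factors $(\mu-\overline{\mu})$ and $(z-\overline{z})$, produces $dz\wedge d\overline{\mu}+d\mu\wedge d\overline{z}$ after orienting the wedge products. Multiplying by the constant $\tfrac{\pi}{\tau_{2}}$ then yields $(\ref{curvP})$.

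The only genuine obstacle here is sign and convention bookkeeping rather than any analytic difficulty: one must keep the orientation of the wedge products consistent (for instance $-\,d\overline{\mu}\wedge dz=dz\wedge d\overline{\mu}$) and commit to the convention $\Theta=\bar\partial\partial\log h$ so that the sign agrees with the one forced by the later statements. Because $\log h$ is a smooth global expression and the formula is a pointwise identity of $(1,1)$-forms, once the constant and the orientations are checked the result holds on all of $M_{1}\times M_{2}$.
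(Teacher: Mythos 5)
Your proposal is correct and is essentially the paper's argument: both reduce to computing $-\partial\overline{\partial}\log h$ (equivalently $\overline{\partial}\partial\log h$) of an explicit weight, and your sign conventions and the identity $z_{2}\mu_{2}=-\tfrac14(z-\overline{z})(\mu-\overline{\mu})$ check out. The only organizational difference is that you differentiate the quotient weight $e^{-\frac{4\pi}{\tau_{2}}z_{2}\mu_{2}}$ directly, whereas the paper computes $\Theta_{\widetilde K}$ and $\Theta_{L_{0}}$ separately and subtracts to isolate the cross terms $\frac{\pi}{\tau_{2}}(dz\wedge d\overline{\mu}+d\mu\wedge d\overline{z})$.
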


\begin{proof}
  The curvature $\Theta_{\widetilde{K}}$ of $(\widetilde{K}, h(z, \mu))\ $  is
\begin{align} \label{curv1}
\Theta_{\widetilde{K}}(z, \mu) &= -\, \partial \, \overline{\partial}\, log (h(z, \mu)) \nonumber \\
& = \frac{\pi}{\tau_{2}}\, \big(\,  dz \wedge d \overline{z} + d \mu \wedge d \overline{ \mu } + dz \wedge d \overline{\mu} + d \mu \wedge d \overline{z}\, \big)
\end{align}
\noindent and the curvature $\Theta_{L_{0}}(z)$ of $\big( L_{0}, h_{L_{0}}(z) \big)$ is
\begin{equation} \label{curv2}
\Theta_{L_{0}}(z) = -\, \partial \, \overline{\partial}\, log (h_{L_{0}}(z)) = \frac{\pi}{\tau_{2}}\, dz \wedge d \overline{z}. \end{equation}
Now $(\ref{curvP})$ follows from $(\ref{curv1})$ and $(\ref{curv2})$.
\end{proof}

\section{The holomorphic vector bundle ${K} \rightarrow
M_{2} \, \protect\cong M $}

\label{section 4} 

To facilitate the curvature computation later on, we shall
now discuss the direct image bundle $K$ of $\widetilde{K}$ in the preceding
section. Recalling the line bundle $\widetilde{K} \rightarrow M_{1} \times
M_{2}$ (cf. Definition \ref{def 1}), we form the push-forward ${K} := {%
\pi_{2}}_{*}\widetilde{K}$ which is a holomorphic vector bundle on $M_{2}$.
One sees that ${K}= {\pi_{2}}_{*} \big( \pi^{*}_{1} L_{0} \otimes (Id \times
\varphi_{L})^{*}\mathrm{P} \big) \otimes L_{0} $ on $M_2$ by the standard projection
formula.\newline

\begin{definition}
\label{def 2} Define a metric $\big(\ \ , \ \ \big)_{h}$ on $%
{K}$ by the $L^{2}$ inner product using $\big(\ \ , \ \ \big)_{h_{L_{\mu}}}$
on ${K}_{\mid{\mu}} = H^{0} (M, \widetilde{K}_{\mid_{M \times \{ \mu \}}})$
(cf. the last statement in Theorem \ref{theorem 1}): 
\begin{equation}  \label{metric h}
\big( \theta (z) , \theta^{\prime} (z) \big)_{h_{L_{\mu}}} := \int_{M}
h_{L_{\mu}}(z)\ \theta (z)\ \overline{ \theta^{\prime} (z)}\ (\, \frac{i}{2}
dz \wedge d \overline{z} \,)
\end{equation}
where $\theta$, $\theta^{\prime}$ are global holomorphic sections of $%
L_{\mu} $. \vspace*{8pt} 
\end{definition}

The main lemma for our computations is as follows. 

\begin{lemma}
\label{Lemma 4} With the inner product $( \ \ ,\ \ )_{h_{{%
L_\mu}}}$, the holomorphic sections 
\begin{equation}
\theta_{m}(z, \mu) = \underset{k \in \mathbb{Z}}{\Sigma}\, e^{\pi i k^{2}
\tau}e^{2 \pi i \tau \frac{m}{\delta} k} e^{2 \pi i \frac{(k \delta + m)}{%
\delta} (z+\mu) }, \ \mbox{\small \  $m=0, 1,..., \delta -1$},
\end{equation}
constitute an \textbf{orthogonal} basis of $H^{0}(M, L_{\mu})$, where 
\begin{equation}
\big(\theta_{m}(z, \mu), \theta_{m}(z, \mu)\big)_{h_{L_{\mu}}} = \sqrt{ 
\frac{\tau_{2}}{2} }\ \delta \ e^{ \frac{2 \pi m^{2}}{\delta^{2}} \tau_{2}
}, \ \mbox{\small \  $m=0, 1,..., \delta -1$.}
\end{equation}
\end{lemma}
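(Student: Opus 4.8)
The plan is to compute the full Gram matrix of the sections $\theta_m(\cdot,\mu)$ directly from the definition $(\ref{metric h})$ and to read off both the orthogonality and the diagonal norms from it. First I would observe that the integrand $h_{L_\mu}(z)\,\theta_m(z,\mu)\,\overline{\theta_n(z,\mu)}$ depends on $z$ only through $w:=z+\mu$, since $\theta_m(z,\mu)=\theta_m(w)$ and $h_{L_\mu}(z)=h_{L_0}(w)$ by $(\ref{lemhL})$. One checks from the quasi-periodicities $(\ref{theta section})$ and $(\ref{hLmu})$ that the function $w\mapsto h_{L_0}(w)\,\theta_m(w)\,\overline{\theta_n(w)}$ is genuinely $\Lambda$-periodic, so its integral over a fundamental domain is invariant under the shift $w=z+\mu$. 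Hence I may set $\mu=0$ with no loss of generality, and the task reduces to evaluating $\int_M h_{L_0}(z)\,\theta_m(z)\,\overline{\theta_n(z)}\,(\tfrac{i}{2}\,dz\wedge d\overline{z})$.

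For the integration I would use the slanted fundamental domain $\{\,u\lambda_1+v\lambda_2:0\le u,v<1\,\}$, i.e. write $z=u\delta+v\tau$ with $u,v\in[0,1)$; then $z_2=v\tau_2$, the measure is $\tfrac{i}{2}\,dz\wedge d\overline{z}=dz_1\wedge dz_2=\delta\tau_2\,du\,dv$, and crucially $h_{L_0}(z)=e^{-2\pi\tau_2 v^2}$ is a function of $v$ alone. Writing $\theta_m(z)=\sum_k c_k^m\,e^{2\pi i(k+m/\delta)z}$ with $c_k^m=e^{\pi i k^2\tau+2\pi i\tau(m/\delta)k}$, the product $\theta_m\overline{\theta_n}$ contributes, after isolating the $u$-dependence, the factor $e^{2\pi i[(k\delta+m)-(l\delta+n)]u}$. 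This is the decisive step: integrating in $u$ over $[0,1)$ forces the two frequencies to coincide, $k\delta+m=l\delta+n$, and since $0\le m,n\le\delta-1$ this is possible only when $m=n$ and $k=l$. Every off-diagonal entry therefore vanishes, yielding orthogonality immediately, and for $m=n$ only the terms $k=l$ survive.

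It remains to evaluate the diagonal. Inserting $|c_k^m|^2=e^{-2\pi\tau_2 k^2-4\pi\tau_2(m/\delta)k}$ together with the surviving $v$-exponential and completing the square, I would bring the integrand to the form $e^{2\pi\tau_2 m^2/\delta^2}\,e^{-2\pi\tau_2(k+m/\delta+v)^2}$, so that
\[
\big(\theta_m(z,\mu),\theta_m(z,\mu)\big)_{h_{L_\mu}}=\delta\tau_2\,e^{2\pi\tau_2 m^2/\delta^2}\sum_{k\in\mathbb{Z}}\int_0^1 e^{-2\pi\tau_2(k+m/\delta+v)^2}\,dv.
\]
Here the sum and integral unfold: as $k$ ranges over $\mathbb{Z}$ and $v$ over $[0,1)$, the argument $t=k+m/\delta+v$ tiles $\mathbb{R}$, so the expression collapses to the Gaussian $\int_{\mathbb{R}}e^{-2\pi\tau_2 t^2}\,dt=(2\tau_2)^{-1/2}$. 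Combining the factors gives $\delta\tau_2\,(2\tau_2)^{-1/2}\,e^{2\pi\tau_2 m^2/\delta^2}=\sqrt{\tau_2/2}\,\delta\,e^{2\pi m^2\tau_2/\delta^2}$, which is exactly the asserted value.

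The main obstacle here is organizational rather than conceptual: one must set up the slanted fundamental domain correctly so that $h_{L_0}$ decouples as a function of $v$ and the $u$-integration genuinely separates the double sum, and one must justify interchanging the double series with the two integrations. Both are controlled by the Gaussian decay of the coefficients $|c_k^m|^2=e^{-2\pi\tau_2 k^2-4\pi\tau_2(m/\delta)k}$, which renders the double series absolutely convergent and legitimizes Fubini together with the unfolding $\sum_k\int_0^1\to\int_{\mathbb{R}}$.
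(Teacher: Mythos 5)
Your proposal is correct and follows essentially the same route as the paper: expand the double series, use the integration in the real lattice direction to force $k\delta+m=l\delta+n$ (hence $m=n$, $k=l$), complete the square, unfold $\sum_k\int_0^1$ into a single Gaussian integral over $\mathbb{R}$, and evaluate. The only differences are cosmetic — you normalize to $\mu=0$ and use the slanted coordinates $(u,v)$ where the paper keeps $\mu$, integrates over the rectangle $[0,\delta]\times[0,\tau_2]$, and substitutes $t=(z_2+\mu_2)/\tau_2$ — and you are slightly more careful than the paper in flagging the Fubini/unfolding justification.
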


\begin{proof}
  By $(\ref{metric h})$, we have 
  
\begin{multline} \label{integral1}
\big( \theta_{m} (z, \mu) , \theta_{m'} (z, \mu) \big)_{h_{L_{\mu}}} = \int_{M} h_{L_{\mu}}(z)\ \theta_{m} (z, \mu)\ \overline{ \theta_{m'} (z, \mu)}\ (\, \frac{i}{2} dz \wedge d \overline{z} \,) \\
= \int_{0}^{\tau_{2}}\int_{0}^{\delta} \underset{k,j \in \mathbb{Z}}{\Sigma}\, e^{\frac{- 2 \pi}{\tau_{2}}(z_{2} + \mu_{2})^{2}} \Big(  e^{\pi i k^{2} \tau}e^{2 \pi i \tau \frac{m}{\delta} k} e^{2 \pi i (k+\frac{m}{\delta}) (z+\mu) } \Big)\\
\Big(  e^{-\pi i j^{2} \overline{\tau}}e^{-2 \pi i \overline{\tau} \frac{m'}{\delta} j} e^{-2 \pi i (j+\frac{m'}{\delta}) (\overline{z}+\overline{\mu}) } \Big)\, dz_{1}\, dz_{2} \hspace*{20pt}
\end{multline}
\noindent where $z=z_{1}+ i z_{2}$, $z_{1}, z_{2} \in \mathbb{R}$.  The terms in $(\ref{integral1})$ related to $z_{1}$ are
\begin{equation}
 \int_{0}^{\delta} e^{2 \pi i z_{1}(k-j + \frac{m-m'}{\delta})}\, dz_{1}
\end{equation}
which survive only when $k=j$ and $m=m'$. The lemma follows by straightforward calculations
in the following aspects:
\begin{itemize}
\item[i)] change of variable $t:= \frac{1}{\tau_{2}}\, (z_{2}+ \mu_{2})$,
\item[ii)] the union of the domains of definite integrals
\[
\underset{k \in \mathbb{Z}}{\Sigma} \int_{\frac{\mu_{2}}{\tau_{2}}}^{1+\frac{\mu_{2}}{\tau_{2}}}\, e^{-2 \pi \tau_{2}(t+k+\frac{m}{\delta})^{2}}\, dt = \int_{-\infty}^{\infty}\, e^{- 2 \pi \tau_{2}\, t^{2}}\, dt,
\]
\item[iii)] the Gaussian integral (where we use $A= 2 \pi \tau_{2}$)
\[
\int_{\infty}^{\infty} e^{-A t^{2}}\, dt = \frac{\sqrt{\pi}}{\sqrt{A}},  \hspace*{20pt} A > 0.
\]
\end{itemize}
\end{proof}

By this lemma, the value of $\big(\theta_{m}(z, \mu),
\theta_{m}(z, \mu)\big)_{h_{L_{\mu}}}$ in Definition $\ref{def 2}$ is
independent of $\mu$. We obtain the first statement of the following
theorem. \newline

\begin{theorem}
\label{KK} $(1)$\ On ${K}$, the curvature tensor of the
metric $(\ \ , \ \ )_{h}$ defined in Definition \ref{def 2}, is identically zero. 
\newline
$(2)$ \ ${K}$ splits holomorphically into a direct sum of holomorphically
trivial line bundles ${K} = \overset{\delta -1}{\underset{m=0}{\bigoplus}} {K%
}_{m} $where each ${K}_{m}$ has the canonical section identified as $%
\theta_m $ of Lemma \ref{Lemma 4}. 
\end{theorem}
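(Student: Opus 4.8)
The plan is to exploit the fact, secured by Theorem \ref{theorem 1}, that the $\mu$-dependent theta functions $\theta_0,\dots,\theta_{\delta-1}$ of Lemma \ref{Lemma 4} furnish not merely a pointwise basis of each fiber but a genuine \emph{global holomorphic frame} of $K$ over $M_2$, in which the $L^2$-metric is moreover \emph{constant}. First I would observe that by Theorem \ref{theorem 1} each $\theta_m(z,\mu)$ is a global holomorphic section of $\widetilde{K}$ on $M_1\times M_2$; since $K={\pi_2}_*\widetilde{K}$, a global section of $\widetilde{K}$ over $M_1\times M_2$ restricts over each open $U\subseteq M_2$ to a section of $\widetilde{K}$ on $\pi_2^{-1}(U)=M_1\times U$, so each $\theta_m$ descends to a global holomorphic section of $K$ over $M_2$. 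By Lemma \ref{Lemma 4} these $\delta$ sections restrict, at every $\mu$, to an orthogonal (hence linearly independent) basis of the fiber $K_{\mid\mu}=H^0(M,L_\mu)$. Consequently $(\theta_0,\dots,\theta_{\delta-1})$ is a holomorphic frame for $K$, giving a global holomorphic trivialization $K\cong M_2\times\mathbb{C}^\delta$.

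For part $(1)$ I would compute the Chern curvature in this holomorphic frame. By the two assertions of Lemma \ref{Lemma 4}, the Gram matrix of the metric $(\ ,\ )_h$ of Definition \ref{def 2} in the frame $(\theta_m)_m$ is the diagonal matrix $H=\mathrm{diag}(c_0,\dots,c_{\delta-1})$ with $c_m=\sqrt{\tau_2/2}\,\delta\,e^{2\pi m^2\tau_2/\delta^2}$: orthogonality kills the off-diagonal entries, and the $\mu$-independence of the norms makes each $c_m$ a constant. Since the Chern connection of a Hermitian holomorphic bundle in a holomorphic frame has connection matrix $H^{-1}\partial H$ and curvature $\Theta=\overline{\partial}(H^{-1}\partial H)$, and $H$ is a constant matrix ($\partial H=0$), both the connection form and the curvature vanish identically, proving $(1)$.

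For part $(2)$ I would set $K_m:=\mathcal{O}_{M_2}\cdot\theta_m$, the holomorphic line subbundle generated by the global section $\theta_m$. Because $\theta_m$ is part of the frame it is nowhere vanishing as a section of $K$, so $K_m$ is holomorphically trivial with canonical section $\theta_m$; and because $(\theta_m)_m$ is a global holomorphic frame, the fiberwise decomposition into the corresponding coordinate lines is holomorphic, yielding the holomorphic splitting $K=\bigoplus_{m=0}^{\delta-1}K_m$ as claimed.

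The main obstacle lies entirely in the first paragraph and is already resolved upstream: the genuine difficulty, flagged in the Introduction, is that the $\theta_m(z,\mu)$ are \emph{a priori} only $\mu$-dependent local data, so one must know they patch into honest global holomorphic sections of $\widetilde{K}$ — equivalently, a global holomorphic frame of $K$ — rather than a merely pointwise basis. Once Theorem \ref{theorem 1} secures this, both the vanishing of the curvature and the holomorphic splitting are immediate formal consequences of the constancy (diagonal and $\mu$-independent) of the Gram matrix in this frame, so no further analytic input is needed.
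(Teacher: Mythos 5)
Your proposal is correct and follows essentially the same route as the paper: the paper's (very terse) proof likewise rests on the observation that, by Theorem \ref{theorem 1} and Lemma \ref{Lemma 4}, the $\theta_m$ form a global holomorphic frame of $K$ in which the Gram matrix of $(\ ,\ )_h$ is constant diagonal, whence $\Theta=\overline{\partial}(H^{-1}\partial H)=0$ and the coordinate-line decomposition gives the trivial splitting. You have simply written out explicitly the steps the paper leaves implicit (your nowhere-vanishing argument for $\theta_m$ also matches the paper's Remark \ref{rem3r}).
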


\begin{proof}
  The first statement is observed precedingly; the second statement follows from
Theorem \ref{theorem 1}, Lemma \ref{Lemma 4} and the first statement.
\end{proof}

\begin{remark}
\label{rem3r} For the above second statement, there is an
argument without using metric. Since $M$ is of dimension one, each $\theta_m$
of Lemma \ref{Lemma 4} generates a holomorphic line subbundle of $K\to
M_{2}\cong M$, still denoted by $K_m \to M_{2}$. It is not difficult to see
that $\theta_m$ is actually nowhere vanishing on $M_{2}$ by using the fact
that by construction, it arises from translates of the ordinary theta
functions. Hence $K_m$ is holomorphically trivial. By similar arguments, $%
\{\theta_m\}_m$ is also independent everywhere on $M$ and hence a global
basis for $K\to M$.
\end{remark}

\section{Connection on the line bundle $\mathcal{P}
\rightarrow M \times M^{*}$}

\label{section 5} 

The vector bundle to be computed is going to live on $\hat M$%
. For this reason and others as explained earlier in Introduction, we are
led to differential geometric aspects of the Poincar\'e line bundle in this
section and the next one. Here, we view the Riemann surface $M$ as a real
2-dimensional smooth manifold and introduce a differential geometric
description of the Poincar\'e line bundle with a connection on it. We follow
closely the treatment in \cite[Subsections 3.2.1 and 3.2.2]{DK}, but use a
suitable sign convention more adapted to our purpose. 

To begin with, we write $V \cong \mathbb{R}^{2}$, and $M= V
/ \Lambda$\, where $\Lambda = \{ \lambda_{1}, \lambda_{2} \}= \{ (\delta,
0),\ (\tau_{1}, \tau_{2}) \}$, $\delta \in \mathbb{N}$, $\tau_{2} > 0$. Let $%
\Lambda^{*}=\{ dx_{1}, dx_{2} \}$ be the dual basis of $\Lambda$; that is, $%
\int_{\lambda_{i}}\, dx_{j}= \delta_{ij}$. 
Let $V^{*}:=Hom(V, \mathbb{R})$ be the dual space
of $V$. Any $\xi \in V^{*}$ is a 1-form with constant real coefficients.
That is, $\xi = \xi_{1}\, dx_{1} + \xi_{2}\, dx_{2}$ with $\xi_{1}$, $%
\xi_{2} \in \mathbb{R}$. We define 
\begin{equation}  \label{M*}
M^{*} := V^{*} / \, 2 \pi \Lambda^{*},
\end{equation}
\noindent and write $[\xi]$ as the equivalent class of $\xi$ in $M^{*}$. 

Let $\underline{\mathbb{C}\, }_{\mid_{V}} : V \times \mathbb{%
C} \rightarrow V$ be the trivial complex line bundle over $V$. An element $%
\xi \in V^{*}$ gives rise to a \textbf{character} $\chi_{\xi} : \Lambda
\rightarrow U(1)$ by 
\begin{equation}
\chi_{\xi}(\lambda) := e^{-i\, <\xi, \lambda>}
\end{equation}
where $<\xi, \lambda> = \xi(\lambda) \in \mathbb{R}$. The set $\Lambda$ acts
on $\underline{\mathbb{C}\, }_{\mid_{V}}$ by 
\begin{equation}  \label{actionlambda}
\lambda \circ (x, \sigma) := (x+\lambda,\, \chi_{\xi}(\lambda)\, \sigma).
\end{equation}
This action preserves the horizontal foliation in $\underline{\mathbb{C}\, }%
_{\mid_{V}}$ which thus descends to a flat connection, denoted by $d$, on the
quotient bundle over $M$. For $\xi=\xi_{1}\, dx_{1}+ \xi_{2} \, dx_{2} \in
V^{*}$, one can define a flat $U(1)$ connection on the complex line bundle $%
\underline{\mathbb{C}\, }_{\mid_{M}} : M \times \mathbb{C} \rightarrow M$ by 
\begin{equation}  \label{conn1}
\nabla^{\xi} := d + i \xi.
\end{equation}

It is a simple fact that the gauge equivalence
classes of flat line bundles on $M$ are parametrized by $M^{*} := V^{*} / \,
2 \pi \Lambda^{*}$. We write 
\begin{equation}
\overline{L}_{[\xi]} := \big(\, \underline{\mathbb{C}\, }_{\mid_{V}} /
\Lambda, \nabla^{\xi} \, \big)
\end{equation}
for the flat line bundle on $M$ corresponding to the connection $%
\nabla^{\xi} $, $\xi \in V^{*}$. With the connection $\nabla^{\xi}$, it is
seen that the parallel transport along the loops is given by $\chi_{\xi}$. 

Remark that in $(\ref{conn1})$ the sign convention is
actually consistent with that in \cite{DK} as far as $\overline{L}_{[\xi]}$
is concerned, because by \cite[proof of Proposition 2.2.3]{DK} as remarked
in \cite[p. 83]{DK}, their $L_{\xi}$ is seen to be the same as $\overline{L}%
_{[\xi]}$ above; see also \cite[proof of Lemma 3.2.14, p. 86]{DK}. 

Dually, for any given $x \in V$ we define a character $%
\chi_{x} : 2 \pi \Lambda^{*} \rightarrow U(1)$ by 
\begin{equation}
\chi_{x}(2 \pi \nu) := e^{ -2 \pi i \, <\nu, x>}.
\end{equation}
So we get flat line bundles $\overline{L}_{[x]}$ over $M^{*}$ with parallel
transport $\chi_{x}$. 

The above picture paves the way for the following lemma. 

\begin{lemma}
\label{Poincare2} There is a complex line bundle $\mathcal{P}
$ over $M \times M^{*}$ with a unitary connection, such that the restriction
of $\mathcal{P}$ to each $M_{[\xi]} :=M \times \{ [\xi] \}$ is isomorphic
(as a line bundle with connection) to $\overline{L}_{[\xi]}$ and the
restriction to each ${M^{*}}_{[x]} : = \{ [x] \}\times M^{*}$ is isomorphic
to $\overline{L}_{[x]}$. 
\end{lemma}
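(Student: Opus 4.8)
The plan is to construct $\mathcal{P}\to M\times M^{*}$ explicitly as a quotient of a trivial bundle over the universal cover $V\times V^{*}$, equipped with a single global connection form, and then verify the two restriction properties by direct comparison with the defining data of $\overline{L}_{[\xi]}$ and $\overline{L}_{[x]}$. Concretely, I would start from the trivial line bundle $\underline{\mathbb{C}}\to V\times V^{*}$ and define an action of the product lattice $\Lambda\times 2\pi\Lambda^{*}$ on it that simultaneously incorporates both families of characters introduced above. The natural choice is the ``Poincar\'e-type'' multiplier system: for $\lambda\in\Lambda$ acting in the $V$-direction and $2\pi\nu\in 2\pi\Lambda^{*}$ acting in the $V^{*}$-direction, I would set
\begin{equation*}
(\lambda,2\pi\nu)\circ\big(x,\xi,\sigma\big)
:=\big(x+\lambda,\ \xi+2\pi\nu,\ e^{-i\langle\xi,\lambda\rangle}\,e^{-2\pi i\langle\nu,x\rangle}\,e^{-2\pi i\langle\nu,\lambda\rangle}\,\sigma\big),
\end{equation*}
where the last cross-term $e^{-2\pi i\langle\nu,\lambda\rangle}$ is the cocycle correction needed for the combined assignment to be a genuine group action (i.e.\ to satisfy the appropriate cocycle/compatibility relation analogous to $(\ref{compatibility})$). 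I would define $\mathcal{P}$ as the quotient of $\underline{\mathbb{C}}$ by this action.

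Next I would write down the connection. On $V\times V^{*}$ I take the global $1$-form $i\,\xi = i(\xi_1\,dx_1+\xi_2\,dx_2)$ viewed as a connection $\nabla := d + i\xi$ on the trivial bundle (with a suitable cross-term to pair with the $2\pi i\langle\nu,x\rangle$ twist in the $\xi$-direction), and check that $\nabla$ is invariant under the lattice action just defined, so that it descends to a unitary connection on $\mathcal{P}\to M\times M^{*}$. Invariance is the one computation requiring care: under the $V$-translations the form $i\xi$ is manifestly invariant since $\xi$ is unchanged, while under the $V^{*}$-translations $\xi\mapsto\xi+2\pi\nu$ the change in $d+i\xi$ must be absorbed exactly by the gauge transformation coming from the factor $e^{-2\pi i\langle\nu,x\rangle}$ in the action; this is the gauge-compatibility identity and is where the cross-term in the cocycle is forced.

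With $\mathcal{P}$ and its connection in hand, the two restriction statements are then nearly tautological. Restricting to $M_{[\xi]}=M\times\{[\xi]\}$ freezes $\xi$, so the quotient construction reduces to $\underline{\mathbb{C}}_{\mid_V}/\Lambda$ with the action $(\ref{actionlambda})$ and the connection $d+i\xi=\nabla^{\xi}$; by definition this is exactly $\overline{L}_{[\xi]}$. Symmetrically, restricting to $M^{*}_{[x]}=\{[x]\}\times M^{*}$ freezes $x$, leaving the $2\pi\Lambda^{*}$-action with character $\chi_{x}(2\pi\nu)=e^{-2\pi i\langle\nu,x\rangle}$ and the correspondingly reduced flat connection, which is $\overline{L}_{[x]}$ by its definition. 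I would note that the well-definedness of both restrictions across the torus requires the isomorphism class on each slice to be independent of the choice of representative $\xi$ (resp.\ $x$) in its class modulo $2\pi\Lambda^{*}$ (resp.\ $\Lambda$), which follows from the gauge-equivalence parametrization already established in the text.

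The main obstacle I anticipate is pinning down the correct cross-term in the lattice action so that it is simultaneously (i) a group action and (ii) compatible with a single globally-defined connection; getting the factor $e^{-2\pi i\langle\nu,\lambda\rangle}$ and its placement right is exactly the content of the cocycle condition, and a wrong sign or missing term breaks either associativity of the action or invariance of the connection. This is the standard subtlety in building the Poincar\'e bundle with connection, and is precisely the point where I would lean on the treatment of \cite[Subsections 3.2.1 and 3.2.2]{DK} with the sign convention fixed in $(\ref{conn1})$, verifying that the descended curvature is the expected constant pairing form $\sum_j dx_j\wedge d\xi_j$ (up to sign) as an independent consistency check.
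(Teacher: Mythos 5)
There is a genuine gap, and it is not the cocycle issue you flag (your cross-term $e^{-2\pi i\langle\nu,\lambda\rangle}$ is identically $1$ because $\langle\nu,\lambda\rangle\in\mathbb{Z}$, so the combined assignment is a group action with or without it). The real problem is that your construction \emph{double-twists} in the $M$-direction. The flat bundle $\overline{L}_{[\xi]}$ admits two gauge-equivalent descriptions: either the quotient of $\underline{\mathbb{C}}_{\mid_V}$ by the $\chi_\xi$-twisted $\Lambda$-action $(\ref{actionlambda})$ carrying the descended flat connection $d$, \emph{or} the trivial bundle on $M$ carrying $\nabla^\xi=d+i\xi$ as in $(\ref{conn1})$ — but not both at once. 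By imposing the multiplier $e^{-i\langle\xi,\lambda\rangle}$ in the $\Lambda$-direction \emph{and} keeping the connection form $i\xi$, you make the two twists cancel: the holonomy of the restriction to $M\times\{[\xi]\}$ becomes $e^{+i\langle\xi,\lambda\rangle}\cdot e^{-i\langle\xi,\lambda\rangle}=1$, so each slice is the trivial flat bundle rather than $\overline{L}_{[\xi]}$. Concretely, your claim that ``under the $V$-translations the form $i\xi$ is manifestly invariant since $\xi$ is unchanged'' is where this hides: descent of $d+\mathbb{A}$ through a lifted action requires $\mathbb{A}(p+\gamma)-\mathbb{A}(p)=-g_\gamma^{-1}dg_\gamma$, and here $g_\gamma=e^{-i\langle\xi,\lambda\rangle}$ depends on the base coordinate $\xi$, so $-g_\gamma^{-1}dg_\gamma=i\,d\langle\xi,\lambda\rangle\neq 0$ while $i\xi$ is $x$-independent. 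Thus $d+i\xi$ does \emph{not} descend through your $\Lambda$-action; and if you repair this by adding the forced cross-term $i\sum_j s_j\,d\xi_j$ (with $s_j$ the $\Lambda$-coordinates of $x$), the total form becomes $i\,d\bigl(\sum_j\xi_j s_j\bigr)$, which is exact — the curvature vanishes, contradicting $(\ref{dA})$, and all slice holonomies are trivial. Your own proposed consistency check (recovering the pairing form $\sum_j d\xi_j\wedge dx_j$) would therefore fail.

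The paper's construction is deliberately asymmetric precisely to avoid this. It works over $M\times V^{*}$ (not $V\times V^{*}$): on each slice $M\times\{\xi\}$ the bundle is already the \emph{trivial} bundle on the compact $M$ and the holonomy $\chi_\xi$ is supplied entirely by the connection form $i\xi$; the only quotient taken is by $2\pi\Lambda^{*}$ via the character $\chi_x$ as in $(\ref{action2pilambda})$–$(\ref{bbP})$, under which $d+i\xi$ genuinely is invariant because there the gauge factor $e^{-2\pi i\langle\nu,x\rangle}$ contributes exactly $2\pi i\,\nu$, matching the shift $\xi\mapsto\xi+2\pi\nu$ in $\mathbb{A}=i\xi$. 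To salvage a symmetric two-lattice construction on $V\times V^{*}$ you would have to drop the character $\chi_\xi$ from the $\Lambda$-part of the action (lifting $\Lambda$ trivially on the fiber) and let the connection alone produce the $M$-direction holonomy — which is, after unwinding, exactly the paper's construction.
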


To be more precise, we consider the connection 1-form $%
\mathbb{A}= i \xi$, $\xi \in V^{*}$ on the trivial line bundle $\underline{%
\mathbb{C}\, }_{\mid_{M \times V^{*}}} : \big( M \times V^{*} \big) \times 
\mathbb{C} \rightarrow M \times V^{*}$. We can lift the actions of $2 \pi
\Lambda^{*}$ on $M \times V^{*}$ to $\underline{\mathbb{C}\, }_{\mid_{M
\times V^{*}}}$ by 
\begin{equation}  \label{action2pilambda}
2 \pi \nu \circ (x, \xi, \sigma) := (x,\, \xi + 2 \pi \nu,\, e^{- 2 \pi i
<\nu, x>}\ \sigma ), \hspace*{20pt} \forall \nu \in \Lambda^{*}.
\end{equation}
This action preserves the connection $d+\mathbb{A}$ and hence induces a
connection on the line bundle 
\begin{equation}  \label{bbP}
\mathcal{P} :=\underline{\mathbb{C}\, }_{\mid_{M \times V^{*}}} / \, 2 \pi
\Lambda^{*} \rightarrow M \times M^{*},
\end{equation}
\noindent denoted as $\nabla^{\mathcal{P}}$. It is worthwhile mentioning
that although the connection is flat on each slice $\mathcal{P}_{\mid_{M
\times \{ [\xi]\}}} \cong \overline{L}_{[\xi]}$, it is \textbf{not} flat on
the entire $\mathcal{P}$. Indeed the curvature of $\nabla^{\mathcal{P}}= d+ 
\mathbb{A}$ is 
\begin{equation}  \label{dA}
d \mathbb{A} + \mathbb{A} \wedge \mathbb{A} = i\, ( d \xi_{1}\wedge dx_{1} +
d \xi_{2}\wedge dx_{2} ).
\end{equation}

\noindent Similarly, if we define a metric $h_{\mathbb{%
\underline{\mathbb{C}\, }}_{\mid_{M \times V^{*}}}}(x, \xi) \equiv 1$ on the
trivial line bundle $\underline{\mathbb{C}\, }_{\mid_{M \times V^{*}}}$, or
equivalently, 
\begin{equation}  \label{good metric}
<\, (x, \xi, \sigma_{1}), (x, \xi, \sigma_{2}) \,>_{\underline{\mathbb{C}\, }%
_{\mid_{M \times V^{*}}}}\ := \ \sigma_{1} \cdot \overline{\sigma_{2}},
\end{equation}
\noindent then the metric $(\ref{good metric})$ is preserved by the action
of $2 \pi \Lambda^{*}$ in $(\ref{bbP})$. Thus it induces a metric on $%
\mathcal{P}$, denoted as $h_{\mathcal{P}}$. 

One sees that the connection $\nabla^{\mathcal{P}}$ and the
metric $h_{\mathcal{P}}$ just defined are compatible on $\mathcal{P}$, that
is, the connection is unitary with respect to the metric as required in
Lemma \ref{Poincare2}. 

The holomorphic structure on the line bundle $\mathcal{P}$
is discussed in the next section. 

\section{Identify $\mathcal{P}$ with the Poincar\'e line
bundle $\mathrm{P}$}

\label{section 6} 

The following lemma is almost immediate. It is included to
make the transformation in coordinates more transparent. 

\begin{lemma}
\label{MM} One has 
\begin{equation*}
Iso : \widehat{M} \xrightarrow{\  \ \sim \  \ } M^{*}.
\end{equation*}
\end{lemma}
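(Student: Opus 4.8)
The plan is to build the isomorphism by hand on the universal covers $\overline{V}^{\,*}$ and $V^{*}$ and then check that it carries one defining lattice exactly onto the other. First I would recall the two descriptions in play: by $(\ref{Pic0})$ and the discussion following it, $\widehat{M}=\overline{V}^{\,*}/\overline{\Lambda}^{\,*}$, where $\overline{V}^{\,*}=H^{0,1}_{\overline{\partial}}(M)$ is the space of conjugate-linear functionals on $V$ and $\overline{\Lambda}^{\,*}=\mathbb{Z}\{dx_1^{*},dx_2^{*}\}$ is the sublattice of those whose real part is half-integral on $\Lambda$; on the other side, by $(\ref{M*})$, $M^{*}=V^{*}/2\pi\Lambda^{*}$ with $V^{*}=\mathrm{Hom}(V,\mathbb{R})$ and $\Lambda^{*}=\mathbb{Z}\{dx_1,dx_2\}$ the $\mathbb{R}$-dual basis. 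Both are real $2$-tori, so it suffices to produce an $\mathbb{R}$-linear isomorphism of covering spaces that matches the lattices.

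The natural candidate is the real-part map $\mathrm{Re}\colon \overline{V}^{\,*}\to V^{*}$, $\alpha\mapsto \mathrm{Re}\,\alpha$, suitably rescaled. I would first verify that $\mathrm{Re}$ is an $\mathbb{R}$-linear isomorphism: $\mathbb{R}$-linearity is immediate, and injectivity follows from conjugate-linearity, since for $\alpha\in\overline{V}^{\,*}$ one has $\mathrm{Re}\,\alpha(iv)=\mathrm{Re}(-i\,\alpha(v))=\mathrm{Im}\,\alpha(v)$, so $\mathrm{Re}\,\alpha\equiv 0$ forces $\alpha\equiv 0$; as both spaces are real $2$-dimensional, a dimension count yields surjectivity.

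Next I would track the lattices explicitly. Using $d\overline{z}=\delta\,dx_1+\overline{\tau}\,dx_2$ from $(\ref{2.0})$ together with the formulas $dx_1^{*}=\tfrac{1}{2\delta}(1-i\tfrac{\tau_1}{\tau_2})\,d\overline{z}$ and $dx_2^{*}=\tfrac{i}{2\tau_2}\,d\overline{z}$, evaluation on $\lambda_1=\delta e_1$ and $\lambda_2=\tau e_1$ gives $\mathrm{Re}(dx_1^{*})=\tfrac12\,dx_1$ and $\mathrm{Re}(dx_2^{*})=\tfrac12\,dx_2$, so that $\mathrm{Re}(\overline{\Lambda}^{\,*})=\tfrac12\Lambda^{*}$. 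Hence setting $Iso:=4\pi\,\mathrm{Re}$ produces an $\mathbb{R}$-linear isomorphism $\overline{V}^{\,*}\to V^{*}$ carrying the $\mathbb{Z}$-basis $\{dx_1^{*},dx_2^{*}\}$ of $\overline{\Lambda}^{\,*}$ onto $\{2\pi\,dx_1,\,2\pi\,dx_2\}$, a $\mathbb{Z}$-basis of $2\pi\Lambda^{*}$; equivalently, on the generators $e_1^{*}=-dx_2^{*}$ and $\tfrac{\tau}{\delta}e_1^{*}=dx_1^{*}$ one gets $e_1^{*}\mapsto -2\pi\,dx_2$ and $\tfrac{\tau}{\delta}e_1^{*}\mapsto 2\pi\,dx_1$. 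Thus $Iso(\overline{\Lambda}^{\,*})=2\pi\Lambda^{*}$, and $Iso$ descends to the asserted isomorphism $\widehat{M}\xrightarrow{\ \sim\ }M^{*}$.

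Since the lemma only asserts that such an isomorphism exists, this is essentially the whole argument and there is no serious obstacle. The one point requiring care is the normalization bookkeeping: the factor $\tfrac12$ coming from the half-integrality condition and the factor $2\pi$ built into $M^{*}=V^{*}/2\pi\Lambda^{*}$ must combine to the single scaling $4\pi$, and the reindexing and signs $e_1^{*}=-dx_2^{*}$, $\tfrac{\tau}{\delta}e_1^{*}=dx_1^{*}$ must be used so that the image lattice is \emph{exactly} $2\pi\Lambda^{*}$ rather than a proper sublattice. I would also note that taking the real part (rather than the imaginary part) is what keeps this $Iso$ consistent with the sign conventions of Section $\ref{section 5}$, which is precisely what makes it the correct identification for the comparison of $\mathcal{P}$ with $\mathrm{P}$ carried out in the rest of this section.
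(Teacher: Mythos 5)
Your proof is correct and is essentially the paper's own argument: the paper defines $Iso$ on the covering spaces by sending $dx_1^{*}\mapsto 2\pi\,dx_1$ and $dx_2^{*}\mapsto 2\pi\,dx_2$ and notes $Iso(\overline{\Lambda}^{\,*})=2\pi\Lambda^{*}$, which is exactly the map you obtain (your $4\pi\,\mathrm{Re}$ is an intrinsic repackaging of the same basis-to-basis assignment, consistent with the paper's explicit coordinate formulas in $(\ref{Iso})$). The lattice bookkeeping $\mathrm{Re}(dx_j^{*})=\tfrac12 dx_j$ checks out, so nothing is missing.
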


\begin{proof}
Recall that $\widehat{M}={\rm Pic}^{0}(M)  \cong   H_{\overline{\partial}}^{0,1}(M) / \, H^{1}(M, \mathbb{Z})$ with the image of $H^{1}(M, \mathbb{Z})$ in $H_{\overline{\partial}}^{0,1}(M)$ as $  \overline{\Lambda}^{\, *} =\{ \, n_{1}\, dx_{1}^{*} + n_{2}\, dx_{2}^{*} \mid n_{1}, n_{2} \in \mathbb{Z} \, \}$ in the notations of Section \ref{section 2}. We  write 

\begin{align}
{\rm Pic}^{0}(M) = \widehat{M} &= \frac{\{ \, c_{1}\, dx_{1}^{*} + c_{2}\, dx_{2}^{*} \mid c_{1}, c_{2} \in \mathbb{R} \, \} }{ \{ \, n_{1}\, dx_{1}^{*} + n_{2}\, dx_{2}^{*} \mid n_{1}, n_{2} \in \mathbb{Z} \, \}}  \\
 &= \frac{\{ \, \hat{\mu}\, e_{1}^{*}  \mid \hat{\mu} \in \mathbb{C} \, \} }{ \{ \, (m_{1} + m_{2}\, \frac{\tau}{\delta})\
 e_{1}^{*} \mid m_{1}, m_{2} \in \mathbb{Z} \, \}} \nonumber
\end{align}

\noindent where $\hat{\mu}= \hat{\mu_{1}} + i \hat{\mu_{2}}$. Similarly, from $(\ref{M*})$,
\begin{equation}
M^{*} = \frac{\{ \, \xi_{1}\, dx_{1} + \xi_{2}\, dx_{2} \mid \xi_{1}, \xi_{2} \in \mathbb{R} \, \} }{ \{ \, 2 \pi \, k_{1}\, dx_{1} + 2 \pi \, k_{2}\, dx_{2} \mid k_{1}, k_{2} \in \mathbb{Z} \, \}}.
\end{equation}
\noindent We have the group isomorphism $Iso : \widehat{M} \rightarrow M^{*}$  by sending $dx_{1}^{*}$ to  $ 2\pi \, dx_{1}$ and $dx_{2}^{*}$ to  $ 2\pi \, dx_{2}$ with
\begin{equation} \label{Iso} \hspace*{-8pt}
\begin{cases} \xi_{1} = \frac{ 2 \pi \delta}{\tau_{2}}\, \hat{\mu}_{2} \\
\xi_{2}= \frac{2 \pi}{\tau_{2}}\, (\tau_{1} \hat{\mu}_{2}- \tau_{2}\hat{\mu}_{1}) \end{cases}
\hspace*{-8pt} \mbox{equivalently $\begin{cases} \hat{\mu}_{1} = \frac{-1}{2 \pi}\, \xi_{2} + \frac{1}{2 \pi} \frac{\tau_{1}}{\delta}\, \xi_{1} \\
\hat{\mu}_{2}= \frac{1}{2 \pi} \frac{\tau_{2}}{\delta}\, \xi_{1} \end{cases}$.
 }
\end{equation}

\noindent In particular, $Iso\, (\overline{\Lambda}^{\, *})=2\pi \Lambda^{*}$. 
\end{proof}

Recall the line bundle $\mathrm{P} \rightarrow M \times \widehat{M}$
of Lemma \ref{Poincare1}. By the above lemma, $M^{*}$ admits a complex
structure inherited from that of $\hat M$. To compare $\mathrm{P}$ and $\mathcal{P}$,
we first note that the global connection $\nabla^{\mathcal{P}}$ in the
preceding section on the line bundle $\mathcal{P} \rightarrow M \times M^{*} 
$ of Lemma \ref{Poincare2} gives a holomorphic structure on $\mathcal{P}$
(where the $M$ has been identified with the previous $M$ automatically as a
complex torus). 

To see this, define 
\begin{equation*}
\widetilde{Iso} \ : = \left(\, Id,\ Iso \, \right): M\times \widehat{M}
\rightarrow M\times M^{*}
\end{equation*}
with $Iso : \widehat{M} \rightarrow M^{*}$ in Lemma \ref{MM}. Let's form the
pull-back bundle $\widetilde{Iso}^*\mathcal{P}$ equipped with the pull-back
metric $\widetilde{Iso}^{*} h_{\mathcal{P}}$ and the pull-back connection $%
\tilde\nabla:=\widetilde{Iso}^{*} \nabla^{\mathcal{P}}$. By $\nabla^{%
\mathcal{P}} = d+ i \xi$, the connection is seen to be 
\begin{equation*}
\tilde\nabla=d + \frac{\pi}{\tau_{2}} ( - \overline{\hat{\mu}}\, dz + \hat{%
\mu}\, d \overline{z})
\end{equation*}
and the curvature $\Theta_{\tilde\nabla}$ of $\tilde\nabla$ is 
\begin{equation}  \label{curvpullback}
d \mathbb{A} + \mathbb{A} \wedge \mathbb{A} = \frac{\pi}{\tau_{2}}(d z
\wedge d \overline{\hat{\mu}} + d \hat{\mu} \wedge d \overline{z}).
\end{equation}

\noindent Remark that the calculation to derive $(\ref{curvpullback})$
is merely to plug $(\ref{Iso})$ and $(\ref{x-zcoord})$ into $(\ref{dA})$.
Now that the curvature of $\tilde\nabla$ is of type $(1,1)$, it is
well-known that $\tilde\nabla$ gives rise to a holomorphic structure on $%
\widetilde{Iso}^*\mathcal{P}$. This implies the above claim. 

We shall now identify $\mathrm{P}$ and $\mathcal{P}$. 

\begin{theorem}
\label{P2P} In the notations as above, let $\mathrm{P} \rightarrow M
\times \widehat{M}$ be the Poincar\'e line bundle of Lemma \ref{Poincare1},
and $\mathcal{P} \rightarrow M \times M^{*} $ of Lemma \ref{Poincare2} be
equipped with the holomorphic structure as given precedingly. Then 
\begin{equation}
\mathrm{P} \cong \widetilde{Iso}^*\mathcal{P}.
\end{equation}
\end{theorem}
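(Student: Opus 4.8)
The plan is to prove the isomorphism by appealing to the uniqueness of the Poincar\'e line bundle in Lemma \ref{Poincare1}: any holomorphic line bundle on $M\times\widehat{M}$ whose restriction to each slice $M\times\{\hat{\mu}\}$ is $\mathrm{P}_{\hat{\mu}}$ and whose restriction to $\{0\}\times\widehat{M}$ is holomorphically trivial must coincide with $\mathrm{P}$. Since the preceding discussion has already equipped $\widetilde{Iso}^*\mathcal{P}$ with a holomorphic structure (the curvature $(\ref{curvpullback})$ of $\tilde\nabla$ being of type $(1,1)$), it suffices to check that this holomorphic bundle satisfies the two characterizing properties, which I would verify separately.

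For the normalization property, restrict $\widetilde{Iso}^*\mathcal{P}$ to $\{0\}\times\widehat{M}$. Because $\widetilde{Iso}=(Id,Iso)$ sends this slice to $\{0\}\times M^{*}$, and by Lemma \ref{MM} the map $Iso$ carries $\overline{\Lambda}^{\,*}$ to $2\pi\Lambda^{*}$, the relevant restriction is $\mathcal{P}_{\mid_{\{0\}\times M^{*}}}$. On this slice the character $\chi_{x}$ at $x=0$ is trivial and the connection $1$-form $\mathbb{A}=i\xi=i(\xi_{1}\,dx_{1}+\xi_{2}\,dx_{2})$ restricts to zero, since its components are forms on the $M$-factor, which vanish along $\{0\}\times M^{*}$. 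Hence the restriction is the trivial flat bundle with trivial holomorphic structure, giving property $(2)$.

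The slice property $(1)$ is the crux. Restricting to $M\times\{\hat{\mu}\}$, the map $\widetilde{Iso}$ is the identity on the $M$-factor, so $\widetilde{Iso}^*\mathcal{P}_{\mid_{M\times\{\hat{\mu}\}}}\cong\mathcal{P}_{\mid_{M\times\{Iso(\hat{\mu})\}}}\cong\overline{L}_{[\xi]}$ with $[\xi]=Iso(\hat{\mu})$, now regarded with its connection-induced holomorphic structure. I would read off the $\overline{\partial}$-potential of this structure as the $(0,1)$-part of the restricted connection $\tilde\nabla$: from the expression for $\tilde\nabla$ preceding $(\ref{curvpullback})$ this part equals $\frac{\pi}{\tau_{2}}\hat{\mu}\,d\overline{z}$. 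It then remains to identify the point of $\mathrm{Pic}^{0}(M)$ represented by this Dolbeault datum through the exponential-sheaf-sequence isomorphism of $(\ref{Pic0})$, and to check that it coincides with the class of $\mathrm{P}_{\hat{\mu}}$ whose normalized multipliers are $e_{\lambda_{1}}\equiv 1$, $e_{\lambda_{2}}=e^{-2\pi i\mu}$ as in $(\ref{eqn21})$ (recall $\mu$ and $\hat{\mu}$ are identified via Property \ref{prop1}). Concretely I would compare $\frac{\pi}{\tau_{2}}\hat{\mu}\,d\overline{z}$ against the representative $\sigma\,d\overline{z}$ of $(\ref{eqn19})$--$(\ref{eqn20})$; the relation $Iso$ in $(\ref{Iso})$ is precisely the one that makes these agree, so that $\overline{L}_{[\xi]}\cong\mathrm{P}_{\hat{\mu}}$ as holomorphic line bundles.

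The main obstacle I anticipate is exactly this last bookkeeping: reconciling the gauge-theoretic description of $\overline{L}_{[\xi]}$ (a unitary flat connection with holonomy character $\chi_{\xi}$) with the holomorphic multiplier description of $\mathrm{P}_{\hat{\mu}}$. The subtlety is the factor $2\pi i$ coming from the connecting map $\exp(2\pi i\,\cdot)$ in the exponential sheaf sequence: the connection potential $\frac{\pi}{\tau_{2}}\hat{\mu}\,d\overline{z}$ must be divided by $2\pi i$ before it is matched with the Dolbeault representative $\sigma\,d\overline{z}=\frac{-i\hat{\mu}}{2\tau_{2}}\,d\overline{z}$ that reproduces the multipliers $(\ref{eqn21})$. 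Once the normalization constants are tracked consistently---this being the role played by the precise form of $Iso$ in $(\ref{Iso})$---both characterizing properties hold and the uniqueness of Lemma \ref{Poincare1} yields $\mathrm{P}\cong\widetilde{Iso}^*\mathcal{P}$.
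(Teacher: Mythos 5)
Your proposal is correct in outline and shares the paper's overall skeleton: both reduce the claim to the two characterizing properties of Lemma \ref{Poincare1} and invoke uniqueness, and both dispose of the normalization property $(2')$ the same way (the character $\chi_{x}$ is trivial at $x=0$ and $\mathbb{A}=i\xi$ involves only $dx_{1},dx_{2}$, so it restricts to zero on $\{0\}\times M^{*}$). Where you genuinely diverge is in the slice property $(1')$, which is the crux. The paper stays entirely inside the multiplier formalism: it reads off the multipliers $e^{i\xi_{1}},e^{i\xi_{2}}$ of $\overline{L}_{[\xi]}$ from the holonomy, and then manufactures an auxiliary line bundle $L_{\Delta,\xi}$ with constant multipliers $e^{i\xi_{1}},e^{i\frac{\tau}{\delta}\xi_{1}}$ and the explicit nowhere-vanishing section $\Phi_{\xi}(z)=e^{iaz}$, $a=\xi_{1}/\delta$, so that $\mathrm{P}_{\hat\mu}\otimes L_{\Delta,\xi}$ has exactly the multipliers of $\overline{L}_{[\xi]}$ by $(\ref{Iso})$. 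You instead pass through the Dolbeault picture: the $(0,1)$-part of $\tilde\nabla$ is $\frac{\pi}{\tau_{2}}\hat\mu\,d\overline{z}$, and after dividing by the $2\pi i$ of the exponential sheaf sequence this is $\frac{-i\hat\mu}{2\tau_{2}}\,d\overline{z}=\hat\mu\,e_{1}^{*}$, i.e.\ exactly the representative $\sigma\,d\overline{z}$ of $(\ref{eqn19})$--$(\ref{eqn20})$ for the point $\hat\mu$; the numbers do close up, and the $2\pi i$ normalization you flag is precisely the right (and only) subtlety, since a trivial smooth bundle with $\overline\partial$-operator $\overline\partial+\beta\,d\overline{z}$ corresponds to the Dolbeault class $\beta/(2\pi i)\,d\overline{z}$ under $(\ref{Pic0})$. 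The trade-off: the paper's route is elementary and self-contained (an explicit trivializing section, no appeal to the normalization of the connecting homomorphism), while yours is more conceptual --- it explains why $Iso$ in $(\ref{Iso})$ has the form it does --- at the cost of having to justify carefully the dictionary between connection potentials and Dolbeault representatives. To turn your sketch into a complete proof you would need to write out that one-line justification (solve $\overline\partial g=\sigma\,d\overline{z}$ on $V$ and compare multipliers $e^{2\pi i\sigma\overline\lambda}$ with the frame change $e^{2\pi i\sigma\overline{z}}$), but there is no gap in the plan.
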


\begin{proof}
  By Lemma \ref{Poincare1} , $\rm P$ is the unique holomorphic line bundle on $M \times \widehat{M}$ satisfying \vspace{-4pt}\\

\noindent $(1)\   {\rm P}_{\mid_{M \times \{ \hat{\mu} \} } } \cong {\rm P}_{\hat{\mu} }$.\vspace*{-4pt} \\
$(2)\  {\rm P}_{\mid_{\{0\} \times \widehat{M}}}$ is holomorphically trivial on $\{ 0 \} \times \widehat{M}$.\\

To show that ${\rm P} \cong   \widetilde{Iso}^*\mathcal{P}$ where
$\widetilde{Iso}= \left(\,  Id,\ Iso \, \right)$ as defined prior to Theorem \ref{P2P},
it therefore suffices to prove the following for $\mathcal{P}\rightarrow M \times M^{*}$: \\

\noindent $(1')$\ for any $[\xi] \in M^{*}$, the line bundle $\overline{L}_{[\xi]} \cong \mathcal{P}_{\mid_{M \times \{ [\xi] \}}}$ is holomorphically isomorphic to ${\rm P}_{Iso^{-1} \, ([\xi])}= {\rm P}_{\hat{\mu}}$.\\
\noindent $(2')$\ $\mathcal{P}_{\mid_{\{0\} \times M^{*}}}$ is holomorphically trivial on $\{0\}\times M^{*}$.\\

To prove $(1')$,
\noindent from the action in $(\ref{actionlambda})$ that
$$\lambda \circ (x, \sigma) = (x+\lambda,\, \chi_{\xi}(\lambda)\, \sigma)= (x+\lambda,\, e^{-i<\xi, \lambda>}\, \sigma),$$
\noindent the holonomy transforms the basis $\lambda$ by $\chi_{\xi}(\lambda)$ as remarked earlier. Accordingly,
the multipliers of $\overline{L}_{[\xi]}$ which transforms inversely, are
\begin{equation}\label{eqnLbar}
\begin{cases}  e_{\lambda_{1}}(z) = e^{\, i \xi_{1}} \\ e_{\lambda_{2}}(z) = e^{\, i \xi_{2}}. \end{cases} \hspace*{210pt}
\end{equation}
\noindent Recall that the multipliers of ${\rm P}_{\hat{\mu}}$ are (cf. $(\ref{Pmu})$, $(\ref{eqn21})$ and
the complex linearity of $(\ref{eqn 17})$)
\begin{equation}\label{eqn57}
\begin{cases}  e_{\lambda_{1}}(z) = 1 \\ e_{\lambda_{2}}(z) = e^{-2 \pi i \hat{\mu}}. \end{cases} \hspace*{200pt}
\end{equation}

  To match the above two sets of multipliers $(\ref{eqnLbar})$ and 
$(\ref{eqn57})$, define a line bundle $L_{\Delta, \xi} \rightarrow M$ with the (constant) multipliers
\begin{equation}\label{eqn58}
\begin{cases}  e_{\lambda_{1}}(z) = e^{\, ia\delta}=e^{\,  i\xi_{1}} \\
e_{\lambda_{2}}(z) = e^{\, ia\tau}= e^{\, i \frac{\tau}{\delta} \xi_{1}} \end{cases} \hspace*{160pt}
\end{equation}
\noindent where $a=\frac{\xi_{1}}{\delta} \in \mathbb{R}$. The function
\begin{equation}
\Phi_{\xi}(z)=e^{\, i a z} 
\end{equation}
\noindent satisfying the quasi-periodic property with respect to $(\ref{eqn58})$ (see Section \ref{section 1-2} and (\ref{2.0})) 
is then a global, nowhere vanishing section of $L_{\Delta, \xi}$.
Therefore $L_{\Delta, \xi}$ is holomorphically trivial on $M$.

  Via $(\ref{eqn57})$ and $(\ref{eqn58})$, the multipliers of the line bundle $\rm{P}_{\hat{\mu}} \otimes L_{\Delta, \xi}$ become
\begin{equation}
\begin{cases}  e_{\lambda_{1}}(z)= 1 \cdot  e^{\,  i\xi_{1}} = e^{\,  i\xi_{1}} \\
e_{\lambda_{2}}(z) = e^{-2 \pi i \hat{\mu}} \cdot e^{\, i \frac{\tau}{\delta} \xi_{1}} = e^{\, i \xi_{2}}, \end{cases} \hspace*{140pt}
\end{equation}
where the second multiplier uses $(\ref{Iso})$.
\noindent Therefore, $\overline{L}_{[\xi]} \cong {\rm P}_{\hat{\mu}}$ holomorphically, proving $(1')$.

It remains to prove $(2')$.
\noindent Recall that the action in $(\ref{action2pilambda})$
$$2 \pi \nu \circ (x, \xi, \sigma) := (x,\, \xi + 2 \pi \nu,\, e^{-2 \pi i <\nu, x>}\  \sigma ), \hspace*{20pt} \forall \nu \in \Lambda^{*}.$$
\noindent At $x=0$, this becomes
\begin{equation}
2 \pi \nu \circ (0, \xi, \sigma) = (0,\, \xi + 2 \pi \nu,\, \sigma ) \hspace*{20pt} \forall \nu \in \Lambda^{*}.
\end{equation}
\noindent Since $\sigma$ is unchanged, it follows that
$\mathcal{P}_{\mid_{\{0\} \times M^{*}}}$ has trivial multipliers and hence a holomorphically trivial line bundle on $M^{*}$,
proving $(2')$.
\end{proof}

\section{Main Results}

\label{section 7} 

We shall now organize our preceding results and prove our
main results here. By Theorem \ref{P2P} that $\rm{P}\cong \widetilde{Iso}^{*}%
\mathcal{P}$, we can pull back the metric $h_{\mathcal{P}}$ and the
connection $\nabla^{\mathcal{P}} = d+ i \xi$ on $\mathcal{P}$ via the map $%
\widetilde{Iso}$, and get a metric and a compatible connection on $\rm{P}$

 \begin{equation} 
 h_{\rm{P}} := \widetilde{Iso}^{*}h_{\mathcal{P}}, \hspace*{10pt}
\nabla^{\rm{P}} := \widetilde{Iso}^{*} \nabla^{\mathcal{P}}. 
\end{equation} 

\noindent Write $\Theta_{\rm{P}} $ for the curvature of $\nabla^{\rm{P}}$.
 If we combine $(\ref{curvpullback})$ with Theorem $\ref{theorem 2}$ in
Section $\ref{section 3}$ (see also ($\ref{eqn 17}$)), we have the first part of the following theorem.  

\begin{theorem} \label{c=1} Recalling that $h_{(Id \times
\varphi_{L_{0}})^{*}\rm{P}}$ and $\Theta_{(Id \times \varphi_{L_{0}})^{*}\rm{P}}$ on $%
(Id \times \varphi_{L_{0}})^{*}\rm{P}\to M\times M$ (see $(\ref{metric IdPhiP})$
and $(\ref{curvP})$), one has the following. On $M\times M$, \newline
$(1)\ (Id \times \varphi_{L_{0}})^{*}\Theta_{\rm{P}} = \Theta_{(Id \times
\varphi_{L_{0}})^{*}\rm{P}}$. \hspace*{10pt} \newline
$(2) \ (Id \times \varphi_{L_{0}})^{*} h_{\rm{P}} =h_{(Id \times
\varphi_{L_{0}})^{*}\rm{P}}$. \hspace*{10pt} \newline 
\end{theorem}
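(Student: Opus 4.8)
The plan is to prove part $(1)$ by direct substitution and to reduce part $(2)$ to the vanishing of a single constant, which is then pinned down on a trivial slice.

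For part $(1)$, recall that by definition $\nabla^{\mathrm{P}} = \widetilde{Iso}^{*}\nabla^{\mathcal{P}}$, so $\Theta_{\mathrm{P}}$ is exactly the curvature $\frac{\pi}{\tau_{2}}(dz\wedge d\overline{\hat{\mu}} + d\hat{\mu}\wedge d\overline{z})$ recorded in $(\ref{curvpullback})$. By Property $\ref{prop1}$, equation $(\ref{eqn 17})$, the complex-linear lift $\widetilde{\varphi_{L_{0}}}$ sends $e_{1}$ to $e_{1}^{*}$, so in the chosen complex coordinates the map $Id\times\varphi_{L_{0}}$ simply replaces $\hat{\mu}$ by $\mu$. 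Pulling $(\ref{curvpullback})$ back along $Id\times\varphi_{L_{0}}$ therefore yields $\frac{\pi}{\tau_{2}}(dz\wedge d\overline{\mu} + d\mu\wedge d\overline{z})$, which is precisely the curvature $\Theta_{(Id\times\varphi_{L_{0}})^{*}\mathrm{P}}$ of Theorem $\ref{theorem 2}$, equation $(\ref{curvP})$. This gives $(1)$.

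For part $(2)$, I would first note that $\nabla^{\mathcal{P}}$ is the Chern connection of $(\mathcal{P}, h_{\mathcal{P}})$: it is unitary (as recorded at the end of Section $\ref{section 5}$), and since its $(0,1)$-part is by construction the $\overline{\partial}$-operator defining the holomorphic structure on $\mathcal{P}$, it is compatible with that structure. Because $\varphi_{L_{0}}$ and $\widetilde{Iso}$ are holomorphic, holomorphic pullback carries Chern connections to Chern connections; hence $(Id\times\varphi_{L_{0}})^{*}\nabla^{\mathrm{P}}$ is the Chern connection of $(Id\times\varphi_{L_{0}})^{*}\mathrm{P}$ for the metric $h_{1}:=(Id\times\varphi_{L_{0}})^{*}h_{\mathrm{P}}$. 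Writing $h_{2}:=h_{(Id\times\varphi_{L_{0}})^{*}\mathrm{P}}$ for the metric $(\ref{metric IdPhiP})$, both $h_{1}$ and $h_{2}$ are Hermitian metrics on the one holomorphic line bundle $(Id\times\varphi_{L_{0}})^{*}\mathrm{P}\to M\times M$, so the ratio $h_{1}/h_{2}$ is a well-defined positive function on $M\times M$, independent of any local frame. The Chern curvature of $h_{i}$ equals $-\partial\overline{\partial}\log h_{i}$, and by part $(1)$ these two curvatures coincide, whence $\partial\overline{\partial}\log(h_{1}/h_{2})=0$. Thus $\log(h_{1}/h_{2})$ is pluriharmonic, hence harmonic, on the compact manifold $M\times M$, which forces $h_{1}/h_{2}$ to be a positive constant $c$.

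Finally I would fix $c=1$ by restricting to the slice $\{0\}\times M$, i.e. $z=0$. Under $Id\times\varphi_{L_{0}}$ this slice maps into $\{0\}\times\widehat{M}$, on which $\mathrm{P}$ is holomorphically trivial with its canonical trivialization by Lemma $\ref{Poincare1}(2)$; correspondingly, by part $(2')$ in the proof of Theorem $\ref{P2P}$, $\mathcal{P}_{\mid_{\{0\}\times M^{*}}}$ has trivial multipliers so that $h_{\mathcal{P}}\equiv 1$ in the induced holomorphic frame, giving $h_{1}\equiv 1$ there. On the other side $(\ref{metric IdPhiP})$ gives $h_{2}(0,\mu)=e^{0}=1$. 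Evaluating the constant $h_{1}/h_{2}$ on this slice yields $c=1$, i.e. $h_{1}=h_{2}$. The hard part is precisely this last step: to read off $c=1$ rather than merely $|c|=1$, one must know that the isomorphism $\mathrm{P}\cong\widetilde{Iso}^{*}\mathcal{P}$ of Theorem $\ref{P2P}$ carries the canonical trivialization of $\mathrm{P}$ over $\{0\}\times\widehat{M}$ to the smooth trivialization of $\mathcal{P}$ over $\{0\}\times M^{*}$ in which $h_{\mathcal{P}}\equiv 1$; otherwise the two normalizations would sit in holomorphic frames differing by a nowhere-zero holomorphic factor. This frame compatibility is exactly what the $(2')$ computation in Theorem $\ref{P2P}$ supplies, so the constant is genuinely $1$.
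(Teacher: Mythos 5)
Your proposal is correct and follows essentially the same route as the paper: part $(1)$ by pulling back the explicit curvature formula $(\ref{curvpullback})$ via $(\ref{eqn 17})$ and matching it with $(\ref{curvP})$, and part $(2)$ by noting that equal curvatures force the ratio of the two metrics to be a constant on the compact manifold $M\times M$, which is then evaluated to be $1$ on the slice $\{0\}\times M$. You merely spell out in more detail (the pluriharmonicity argument and the frame compatibility over $\{0\}\times M$ via $(2')$ of Theorem \ref{P2P}) what the paper's two-line proof leaves implicit.
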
 

\begin{proof} The first part of the theorem
is just noted. In turn, it yields that the two metrics in the second
statement differ at most by a multiplicative constant $c$. If one restricts
both metrics to $\{0\} \times M$, one sees that $c=1$. 
\end{proof} 
 
To proceed further, we form some vector bundles as follows. %

\begin{definition} Define the line bundles

\begin{align} 
\widetilde{E} &:= \pi_{1}^{*}L_{0} \otimes \rm{P} \rightarrow M \times \widehat{M}%
. \\ \widetilde{E^{\prime }}&:= \pi_{1}^{*}L_{0} \otimes (Id \times
\varphi_{L_{0}})^{*} \rm{P} \rightarrow M_{1} \times M_{2} \hspace*{50pt} 
\end{align}

\noindent where $M_{1} \cong M_{2} \cong M $, and the vector bundles 
\begin{equation*} {E} := (\pi_{2})_{*} \widetilde{E} \rightarrow \widehat{M}%
, \hspace*{10pt} {E^{\prime }} := (\pi_{2})_{*} \widetilde{E}\rightarrow
M_{2}. \hspace*{40pt} 
\end{equation*} 
\end{definition}

The transformation from $L_0\to M$ to $E\to \hat M$ (or $E^{\prime }\to M$)
can be placed in the context of the so-called Fourier-Mukai transform, but
we shall not go into it here. We refer to \cite[Section 5]{P2} for more
details.  

In what follows, we shall interchangeably use
the identification $\rm{P}\cong \widetilde{Iso}^{*}\mathcal{P}$ obtained in
Theorem \ref{P2P}. First equip $\widetilde{E}$, $\widetilde{E^{\prime }}$
with metrics 

\begin{equation} \label{hE} h_{\widetilde{E}}= \pi_{1}^{*}
h_{L_{0}} \otimes h_{\rm{P}} \mbox{\hspace*{10pt} where \ $ h_{\rm P}= \widetilde{%
Iso}^{*} h_{\mathcal{P}} $, } 
\end{equation} 

\noindent (cf. $(\ref{hLo})$
for $h_{L_0}$ and $(\ref{metric IdPhiP})$) 

\begin{equation} \label{hE'} h_{%
\widetilde{E^{\prime }}}= \pi_{1}^{*} h_{L_{0}} \otimes h_{(Id \times
\varphi_{L_{0}})^{*} \rm{P}} 
\end{equation} 

\noindent respectively. By $(2)$ of Theorem 
\ref{c=1}, one has 

\begin{equation} \label{hEhE'} h_{\widetilde{E^{\prime }}%
}= (Id \times \varphi_{L_{0}})^{*}h_{\widetilde{E}}. 
\end{equation} 
 
We shall now equip the vector bundle ${E}$ with a metric
given by the $L^{2}$-metric on ${E}_{\mid_{\hat{\mu}}} = H^{0}(M, L_{\hat{\mu%
}})$ using $h_{\widetilde{E}}$, and similarly the $L^{2}$-metric on $%
{E^{\prime }}_{\mid_{\mu}} = H^{0}(M, L_{\mu})$ using $h_{\widetilde{%
E^{\prime }}}$. These $L^2$-metrics on ${E}$ and ${E^{\prime }}$ are denoted
by $h_{{E}}$ and $h_{{E^{\prime }}}$ respectively.

Recall that $\widetilde{K} = \pi_{1}^{*}L_{0} \otimes (Id \times
\varphi_{L_{0}})^{*}\rm{P} \otimes \pi_{2}^{*}L_{0}$. By the explicit expressions 
$(\ref{K metric formula})$ and $(\ref{hLo})$, one sees that 

\begin{equation}
h_{\widetilde{E^{\prime }}} = e^{\frac{- 2 \pi}{\tau_{2}}
({z_{2}}^{2}+2z_{2}\mu_{2})}. 
\end{equation} 

We summarize the above in the following. 

\begin{proposition} %
\label{prop2} \begin{equation} (\widetilde{E^{\prime }}, h_{\widetilde{%
E^{\prime }}}) = (Id \times \varphi_{L_{0}})^{*}(\widetilde{E}, h_{%
\widetilde{E}}) \end{equation} where $h_{\widetilde{E}}$ and $h_{\widetilde{%
E^{\prime }}}$ are defined as in $(\ref{hE})$ and $(\ref{hE'})$. As a
consequence, \begin{equation} ({E^{\prime }}, h_{{E^{\prime }}}) =
\varphi_{L_{0}}^{*}({E}, h_{{E}}) \end{equation} with the curvatures \begin{%
equation} \Theta ({E^{\prime }}, h_{{E^{\prime }}}) =
\varphi_{L_{0}}^{*}\Theta({E}, h_{{E}}). 
\end{equation}  
\end{proposition}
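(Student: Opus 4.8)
The plan is to establish Proposition \ref{prop2} in three stages, proceeding from the bundle-level identity down to the curvature identity. The first and most substantial stage is the identity $(\widetilde{E'}, h_{\widetilde{E'}}) = (Id \times \varphi_{L_0})^*(\widetilde{E}, h_{\widetilde{E}})$ on $M_1 \times M_2$. For the underlying holomorphic line bundles, I would simply unwind the definitions: $\widetilde{E} = \pi_1^* L_0 \otimes \mathrm{P}$ on $M \times \widehat{M}$, so pulling back by $Id \times \varphi_{L_0}$ and using that $\pi_1 \circ (Id \times \varphi_{L_0}) = \pi_1$ gives $(Id \times \varphi_{L_0})^*\widetilde{E} = \pi_1^* L_0 \otimes (Id \times \varphi_{L_0})^* \mathrm{P}$, which is precisely $\widetilde{E'}$. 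For the metrics, by $(\ref{hE})$ the metric $h_{\widetilde{E}} = \pi_1^* h_{L_0} \otimes h_{\mathrm{P}}$, so its pullback is $\pi_1^* h_{L_0} \otimes (Id \times \varphi_{L_0})^* h_{\mathrm{P}}$. The point is that this equals $h_{\widetilde{E'}} = \pi_1^* h_{L_0} \otimes h_{(Id \times \varphi_{L_0})^* \mathrm{P}}$ from $(\ref{hE'})$, which is exactly the content of $(2)$ of Theorem \ref{c=1}, namely $(Id \times \varphi_{L_0})^* h_{\mathrm{P}} = h_{(Id \times \varphi_{L_0})^* \mathrm{P}}$. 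So the first stage reduces entirely to assembling the definitions together with Theorem \ref{c=1}.

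The second stage passes from the total families to the pushed-forward vector bundles: $(E', h_{E'}) = \varphi_{L_0}^*(E, h_E)$ on $M$. Since $E = (\pi_2)_* \widetilde{E}$ and $E' = (\pi_2)_* \widetilde{E'}$, the base-change property of pushforward along the projections, together with the cartesian square relating $Id \times \varphi_{L_0}$ to $\varphi_{L_0}$, identifies $E' = \varphi_{L_0}^* E$ at the level of holomorphic vector bundles. Fiberwise this is the statement $E'|_\mu = H^0(M, L_\mu) = H^0(M, L_{\varphi_{L_0}(\mu)}) = E|_{\varphi_{L_0}(\mu)}$, using $\mathrm{P}_{\varphi_{L_0}(\mu)} \cong \mathcal{T}_\mu^* L_0 \otimes L_0^*$ from $(\ref{Pmu})$. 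For the metrics, the $L^2$-metric $h_{E'}$ is defined by integrating $h_{\widetilde{E'}}$ over fibers of $\pi_2$, and the stage-one identity $h_{\widetilde{E'}} = (Id \times \varphi_{L_0})^* h_{\widetilde{E}}$ means this fiberwise integral is exactly the pullback $\varphi_{L_0}^* h_E$ of the $L^2$-metric defining $h_E$; here one uses that $Id \times \varphi_{L_0}$ is the identity on the fiber direction $M_1$ over which one integrates, so the integration commutes with the base change.

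The third stage, the curvature identity $\Theta(E', h_{E'}) = \varphi_{L_0}^* \Theta(E, h_E)$, is then immediate and is the easy step: curvature of a Hermitian holomorphic vector bundle is natural with respect to pullback, so once $(E', h_{E'}) = \varphi_{L_0}^*(E, h_E)$ is established as a Hermitian holomorphic bundle, its Chern curvature automatically pulls back. I expect the main obstacle to lie in the second stage, specifically in justifying that forming the $L^2$-metric commutes with the base change $\varphi_{L_0}$. The delicate point, flagged in the Introduction, is that the global sections $\theta_m(z,\mu)$ depend on $\mu$ and the $L^2$-inner product is computed fiber-by-fiber; one must verify that the metric $h_{\widetilde{E'}}$ restricted to a fiber $M_1 \times \{\mu\}$ agrees with $h_{\widetilde{E}}$ restricted to $M_1 \times \{\varphi_{L_0}(\mu)\}$ as integrands, which is precisely where the careful bookkeeping of $(\ref{hE})$, $(\ref{hE'})$ and Theorem \ref{c=1} pays off. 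The remaining manipulations are the routine verification that pullback of bundles, metrics, and fiber integration are compatible, which I would present concisely rather than in full detail.
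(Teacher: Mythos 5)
Your proposal is correct and follows essentially the same route as the paper, which offers no separate proof but simply ``summarizes the above'': the bundle-level identity is the definitions plus $(2)$ of Theorem \ref{c=1} (recorded as $(\ref{hEhE'})$), the passage to $(E',h_{E'})=\varphi_{L_0}^*(E,h_E)$ is fiberwise identification of the $H^0$ spaces with their $L^2$-inner products via base change along the cartesian square, and the curvature identity is naturality of the Chern connection under pullback. The only minor remark is that the ``delicate point'' you flag about the $\mu$-dependence of the theta functions is not actually an issue here --- it concerns the later splitting arguments, not this proposition --- but this does not affect the correctness of your argument.
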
 

Recall that ${K} \rightarrow M_{2}$ is
the vector bundle ${K}_{\mid_{\mu}} = H^{0}(M, \widetilde{K}_{\mid_{M\times
\{\mu \}}})$ of Section $\ref{section 4}$. As vector bundles 

\begin{equation%
} {K} = {E^{\prime }} \otimes L_{0}, \hspace*{4pt} {E^{\prime }} = {K}
\otimes L_{0}^{*} \mbox{\hspace*{4pt} where $L_{0}^{*}$ is the dual of $L_{0}
$}. 
\end{equation} 

\noindent By Theorem \ref{KK} that ${K}$ splits into line bundles (each of which is holomorphically trivial)

\begin{equation} \label{Esplit} {K} = \overset{\delta -1}{%
\underset{m=0}{\bigoplus}} {K}_{m},
\end{equation}

\noindent it follows that 

\begin{equation} \label{Esplit1} {E^{\prime }} = {K}
\otimes L_{0}^{*}= \overset{\delta -1}{\underset{m=0}{\bigoplus}}
\big({K}_{m}\otimes L_{0}^{*} \big)= \overset{\delta -1}{\underset{m=0}{\bigoplus}}
L_{0}^{*}. 
\end{equation}  

By Theorem \ref{KK}, $(\ref{Esplit1})$, and $(\ref{curv2})$, the curvature of $%
{E^{\prime }}$ is immediately computed as follows. 

\begin{theorem} 
\label{theorem 7} Let's denote by $\big( Id \big)_{\delta
\times \delta}$ the $\delta \times \delta$ identity matrix. Then we have
 
\begin{equation} \Theta ({E^{\prime }}, h_{{E^{\prime }}}) = -
\Theta_{L_{0}}(\mu) \big( Id \big)_{\delta \times \delta} = \frac{- \pi}{%
\tau_{2}}\, d \mu \wedge d \overline{\mu}\ \big( Id \big)_{\delta \times
\delta}. 
\end{equation} 
\end{theorem}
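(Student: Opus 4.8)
The plan is to obtain $\Theta(E',h_{E'})$ as a formal consequence of the vanishing of the curvature of $(K,h_K)$ from Theorem \ref{KK}$(1)$, using only the behaviour of curvature under tensoring by a line bundle. The algebraic input is the identity $E' = K \otimes L_0^*$ on $M_2 \cong M$ recorded just before $(\ref{Esplit1})$, which comes from $\widetilde{K} = \widetilde{E'} \otimes \pi_2^* L_0$ and the projection formula. Granting that the two $L^2$-metrics are compatible with this tensor factorization, one has $\Theta(E',h_{E'}) = \Theta(K,h_K)\otimes Id + Id \otimes \Theta(L_0^*,h_{L_0}^{-1})$; the first term is zero by Theorem \ref{KK}$(1)$, and since $L_0^*$ is a line bundle the second term is the scalar $2$-form $-\Theta_{L_0}$ acting diagonally, which is precisely the asserted answer.

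The one genuine verification is that $(E',h_{E'})$ is indeed the tensor product $(K,h_K)\otimes (L_0^*, h_{L_0}^{-1})$ as Hermitian holomorphic bundles, i.e.\ that the $L^2$-metric splits off the $L_0$ factor. For this I would compare the weights defining the two fibre integrals: the $L^2$-metric on $K$ integrates $|\cdot|^2$ against $h_{\widetilde K}(z,\mu)$ of $(\ref{K metric formula})$, while that on $E'$ integrates against $h_{\widetilde{E'}}(z,\mu)$. From $(\ref{K metric formula})$, $(\ref{metric IdPhiP})$ and $(\ref{hLo})$ one reads off $h_{\widetilde K}(z,\mu) = h_{\widetilde{E'}}(z,\mu)\, h_{L_0}(\mu)$, and the crucial feature is that $h_{L_0}(\mu) = e^{-\frac{2\pi}{\tau_2}\mu_2^2}$ is independent of the fibre variable $z$ over which $\pi_2$ integrates. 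Hence it factors out of the integral, giving $h_K = h_{E'}\,h_{L_0}$ fibrewise, equivalently $h_{E'} = h_K\otimes h_{L_0}^{-1}$.

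Feeding in $\Theta_{L_0}(\mu) = \frac{\pi}{\tau_2}\,d\mu\wedge d\overline{\mu}$ from $(\ref{curv2})$ (with the coordinate on $M_2$ now named $\mu$) and $\Theta(L_0^*,h_{L_0}^{-1}) = -\Theta_{L_0}$ then yields $\Theta(E',h_{E'}) = -\Theta_{L_0}(\mu)\,\big(Id\big)_{\delta\times\delta} = \frac{-\pi}{\tau_2}\,d\mu\wedge d\overline{\mu}\,\big(Id\big)_{\delta\times\delta}$. A second, equivalent route uses the holomorphic splitting $(\ref{Esplit1})$ directly: each summand $K_m\otimes L_0^*$ has $K_m$ holomorphically trivial with the constant-in-$\mu$ norm of Lemma \ref{Lemma 4} (hence flat metric), the orthogonality of $\{\theta_m\}$ in Lemma \ref{Lemma 4} makes the sum $h_{E'}$-orthogonal, and the curvature becomes block-diagonal with each block $\Theta(L_0^*) = -\Theta_{L_0}$. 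Under either route the only point requiring care is this metric bookkeeping; the substantive analysis---the $\mu$-independence of the norms and the orthogonality---has already been carried out in Lemma \ref{Lemma 4} and Theorem \ref{KK}, so the present theorem is essentially formal once the compatibility above is in place.
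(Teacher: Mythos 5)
Your proposal is correct and follows essentially the same route as the paper, which likewise deduces the result from Theorem \ref{KK}, the identity $E' = K\otimes L_0^*$ (equivalently the splitting $(\ref{Esplit1})$), and $(\ref{curv2})$. The one point you verify explicitly---that $h_{\widetilde K}(z,\mu)=h_{\widetilde{E'}}(z,\mu)\,h_{L_0}(\mu)$ with the factor $h_{L_0}(\mu)$ independent of the fibre variable $z$, so that the $L^2$-metrics genuinely satisfy $h_{E'}=h_K\otimes h_{L_0}^{-1}$---is left implicit in the paper, and your check of it is accurate.
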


Combining Theorem $%
\ref{theorem 7}$ and Proposition $\ref{prop2}$ (see also ($\ref{eqn 17}$)), we have 

\begin{theorem} 
\label{theorem 8}  \noindent $(1)\
\Theta({E}, h_{{E}}) = - \frac{\pi}{\tau_{2}}\, d\hat{\mu} \wedge d 
\overline{\hat{\mu}}\ \big( Id \big)_{\delta \times \delta}$.\newline
\noindent $(2)$\ As a consequence of $(1)$, the first Chern class of ${E}$
is 

\begin{equation} c_{1}({E}, h_{{E}}) = \frac{-i \delta}{2 \tau_{2}}\, d 
\hat{\mu} \wedge d \overline{\hat{\mu}} 
\end{equation} (at the level of
differential forms).  
\end{theorem}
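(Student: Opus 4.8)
The plan is to derive Theorem \ref{theorem 8} directly from the already-established results in Theorem \ref{theorem 7} and Proposition \ref{prop2}, so that the proof is essentially a transport of the curvature formula on $E'$ back to $E$ via the map $\varphi_{L_0}$, followed by a routine passage from curvature to first Chern class. First I would invoke Proposition \ref{prop2}, which gives the pullback relation
\begin{equation*}
\Theta(E', h_{E'}) = \varphi_{L_0}^{*}\,\Theta(E, h_E).
\end{equation*}
Since Theorem \ref{theorem 7} already supplies the explicit value
\begin{equation*}
\Theta(E', h_{E'}) = \frac{-\pi}{\tau_2}\, d\mu \wedge d\overline{\mu}\,\big(Id\big)_{\delta\times\delta},
\end{equation*}
the task for part $(1)$ reduces to recovering $\Theta(E, h_E)$ from its pullback. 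This is where the linear map $\widetilde{\varphi_{L_0}}:V\to \overline{V}^{\,*}$ and its normalization $(\ref{eqn 17})$, namely $\widetilde{\varphi_{L_0}}(e_1)=e_1^{*}$, enter: under the identification sending $\mu\,e_1$ to $\hat\mu\,e_1^{*}$ with $\hat\mu=\mu$, the pullback $\varphi_{L_0}^{*}$ acts on the constant $(1,1)$-forms simply by substituting $d\hat\mu\mapsto d\mu$ and $d\overline{\hat\mu}\mapsto d\overline{\mu}$. Consequently the form $\frac{-\pi}{\tau_2}\,d\hat\mu\wedge d\overline{\hat\mu}\,(Id)_{\delta\times\delta}$ on $\hat M$ pulls back to exactly the expression on $M$ given by Theorem \ref{theorem 7}, which establishes part $(1)$.

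For part $(2)$ I would use the standard relation between the first Chern form and the curvature of a Hermitian holomorphic vector bundle, $c_1(E,h_E)=\frac{i}{2\pi}\,\mathrm{tr}\,\Theta(E,h_E)$ at the level of differential forms. Taking the trace of the diagonal matrix from part $(1)$ multiplies the scalar $2$-form by $\delta$, giving
\begin{equation*}
\mathrm{tr}\,\Theta(E,h_E) = \frac{-\pi\delta}{\tau_2}\, d\hat\mu\wedge d\overline{\hat\mu},
\end{equation*}
and then multiplying by $\frac{i}{2\pi}$ yields
\begin{equation*}
c_1(E,h_E) = \frac{i}{2\pi}\cdot\frac{-\pi\delta}{\tau_2}\, d\hat\mu\wedge d\overline{\hat\mu} = \frac{-i\delta}{2\tau_2}\, d\hat\mu\wedge d\overline{\hat\mu},
\end{equation*}
which is precisely the claimed formula. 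The only genuine point requiring care is the bookkeeping in part $(1)$: one must verify that the complex-linear change of coordinates induced by $\varphi_{L_0}$ really acts as the clean substitution $\hat\mu\leftrightarrow\mu$ on these translation-invariant forms, rather than introducing a Jacobian factor. The main obstacle, such as it is, therefore lies not in the final computation but in correctly matching the coordinate conventions between $\mu$ on $M$ and $\hat\mu$ on $\hat M$; this is controlled by Property \ref{prop1} and equation $(\ref{eqn 17})$, which guarantee that $\widetilde{\varphi_{L_0}}$ is complex linear and sends $e_1$ to $e_1^{*}$, so no extraneous constant appears. Once this identification is acknowledged, both statements follow immediately by combining Theorem \ref{theorem 7} with Proposition \ref{prop2}.
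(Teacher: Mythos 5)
Your proposal is correct and follows essentially the same route as the paper, which proves Theorem \ref{theorem 8} precisely by combining Theorem \ref{theorem 7} with Proposition \ref{prop2} and the normalization $(\ref{eqn 17})$; your added care about the absence of a Jacobian factor under $\varphi_{L_0}^{*}$ (justified since $\widetilde{\varphi_{L_0}}$ sends $e_1$ to $e_1^{*}$, so $d\hat\mu$ pulls back to $d\mu$) and the injectivity of pullback under the isogeny $\varphi_{L_0}$ simply makes explicit what the paper leaves implicit.
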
 

\begin{remark} \label{rem85} i) Our computational result of $c_1(E)$ agrees
with that of the torus case in \cite[Theorem 12]{P2} of Prieto, in view of
his Remark 10 and various notations in p. 388, p. 381 and p. 386. 

ii) It is unclear to us whether Theorem \ref{theorem 8} can be
proved independently of Theorem \ref{theorem 7}, mainly due to the fact that
our description of ($\mu$-dependent) theta functions is most conveniently
given on $M\times M$ rather than on $M\times \hat M$, as remarked earlier in
Introduction. 
\end{remark}

\end{document}